\newtheorem{theorem}{Theorem}[section]
\newtheorem{lem}[theorem]{Lemma}
\newtheorem{cor}[theorem]{Corollary}
\newtheorem{prop}[theorem]{Proposition}
\theoremstyle{definition}
\newtheorem{example}[theorem]{Example}
\newtheorem{remark}[theorem]{\it Remark}
\newtheorem{remarks}[theorem]{\it Remarks}
\numberwithin{equation}{section}
\def \a{{\alpha}}
\def \b{{\beta}}
\def \e{{\varepsilon}}
\def \l{{\lambda}}
\def \p{{\varphi}}
\def \m{{\mu}}
\def \L{{\Lambda}}
\def \S{{\mathcal S}}
\def \C{{\cal C}}
\def \E{{\bf E}\, }
\def \N{{\bf N}}
\def \P{{\bf P}}
\def \qq{{\qquad}}
\def \R{{\bf R}}
\def \T{{\bf T}}
\def \Z{{\bf Z}}
\def \dd{{\rm d}}
\def \noi{{\noindent}}
\def\E{{\mathbb E \,}}
\def \T{{\mathbb T}}
\def\P{{\mathbb P}}
\def\R{{\mathbb R}}
\def \F{{\mathcal F}}
\def\Z{{\mathbb Z}}
\def\N{{\mathbb N}}
\def\C{{\mathbb C}}  
  \title[Convergence of Almost periodic series]{Pointwise convergence of   almost periodic Fourier series and associated series of dilates}
\begin{document}  
 \author{Christophe Cuny}
\address{Universit\'e de la Nouvelle-Cal\'edonie
Equipe ERIM,
B.P. 4477,
F-98847 Noumea Cedex}.
\email{cuny@univ-nc.nc}

\author{Michel Weber}

\address{IRMA, 10 rue du G\'en\'eral Zimmer,
67084 Strasbourg Cedex, France}
\email{michel.weber@math.unistra.fr}
\keywords{Almost periodic function, Stepanov space,   Carleson Theorem, Dirichlet series, dilated function, series, almost everywhere convergence.  \vskip 1 pt \emph{2010 Mathematical Subject Classification}: Primary 42A75,    Secondary 42A24, 42B25.}


\maketitle 
 \begin{abstract}  Let  $\S^2$ be  the Stepanov space
 with norm  $\|f\|_{\S^2}=\sup_{x\in \R}\big( \int_x^{x+1} |f(t)|^2\dd t\big)^{1/2}$.  
Let $ \lambda_n\uparrow\infty$.
Let $(a_n)_{n\ge 1}$  be satisfying Wiener's condition:  
 $
\sum_{n\ge 1} \big(\sum_{k\, :\, n\le \lambda_k \le n+1}|a_k|\big)^2 <\infty$.   We establish the following maximal inequality 
\begin{equation*} 
\big\| \sup_{N\ge 1} \big|\sum_{n=1}^Na_n{\rm e}^{i\lambda_n t}\big|
\, \big\|_{\S^2}\le C\, \Big( \sum_{n\ge 1} \big(\sum_{k\, :\, n\le \lambda_k \le n+1}|a_k|\big)^2 \Big)^{1/2}
\end{equation*}
 where  $C>0$  is  a universal constant. 
  Moreover, the series $\sum_{n\ge 1} a_n{\rm e}^{it\lambda_n }$ converges 
for $\lambda$-a.e. $t\in \R$.  We give a simple and direct proof. This contains  as a  special case, 
  Hedenmalm and Saksman result for Dirichlet series. We also obtain maximal inequalities for corresponding series of dilates.   
Let $(\lambda_n)_{n\ge 1}$, $(\mu_n)_{n\ge 1}$ be  non-decreasing sequences of  real numbers 
greater than 1.  We prove the following interpolation theorem. Let $1\le p,q\le 2$ be such that $1/p+1/q=3/2$. 
 There exists $C>0$ such that for any sequence  $(\alpha_n)_{n\ge 1}$ and  
$(\beta_n)_{n\ge 1}$ of complex numbers such that 
 $  \sum_{n\ge 1} \big(\sum_{k\,:\, n\le \lambda_k< n+1}|\alpha_k|\,\big)^p
<\infty$ and $\sum_{n\ge 1} \big(\sum_{k\,:\, n\le \mu_k< n+1} |\beta_k|\,\big)^q
<\infty$,
  we have 
$$
\Big\|\sup_{N\ge 1} \big|\sum_{n=1}^N \alpha_n D(\lambda_n t)\big|\, \Big\|_{\S^2}
\le C \Big(\sum_{n\ge 1} \big(\sum_{k\,:\, n\le \lambda_k< n+1}|\alpha_k|\big)^p \Big)^{1/p}\Big(\sum_{n\ge 1} \big(\sum_{k\,:\, n\le
 \mu_k< n+1}|\beta_k|\big)^q\Big)^{1/q } 
$$
where $D(t)= \sum_{n\ge 1}\beta_n {\rm e}^{i\mu_n t}$ is defined in $\S^2$. 
 Moreover, the series $\sum_{n\ge 1} 
\alpha_n D(\lambda_nt)$ converges in $\S^2$ and for $\lambda$-a.e. $t\in \R$. We further show that if  $\{\l_k, k\ge 1\}$ satisfies the  following condition
\begin{eqnarray*} \sum_{  k\not=\ell\,,\,  k'\not=\ell'\atop (k,\ell)\neq(k',\ell')  
  }
\big(1-|(\l_k-\l_\ell)-(\l_{k'}-\l_{\ell'}) |\big)_+^2 \, <\infty , \end{eqnarray*}
then the series $\sum_{k} a_k {\rm e}^{i\l_kt}$ converges on a set of positive Lebesgue measure,  only if  the series  $\sum_{k=1}^\infty |a_k|^2$
converges. The above condition is in particular fulfilled when $\{\l_k, k\ge 1\}$ is a Sidon sequence.
  
 \end{abstract}
 

 \section{Introduction.}\label{s1}     
  We study almost everywhere convergence properties of   almost periodic Fourier series  in the Stepanov space $\S^2$ and of corresponding series of dilates. This space is   defined as the sub-space of functions $f$ of $L^2_{\rm loc}(\R)$     verifying the   following analogue of Bohr almost periodicity property: {\it For all $\e>0$, there exists $K_\e>0$ such that for any $x_0\in \R$, there exists $\tau\in [x_0, x_0+K_\e]$ such that  $\| f(.+\tau)-f(.)\|_{\S^2} \le \e$}. The Stepanov norm in  $\S^2 $ is defined by 
  $$\|f\|_{\S^2}=\sup_{x\in \R}\Big( \int_x^{x+1} |f(t)|^2\dd t\Big)^{1/2}. $$

Recall some basic facts.   By the fundamental theorem on almost periodic functions see \cite[p.\,88]{Be}, the Stepanov space
  $\S^2$ coincides with the 
closure of   the set of generalized trigonometric polynomials 
$\{\sum_{k=1}^n a_k {\rm e}^{i\lambda_k t}\,:\, \alpha_k \in \C,\, 
\lambda_k \in \R\}$ with respect to this norm. 
 It is clear by considering for instance $f= \chi_{ [0,1]}$ that
the space $\{f\in L^2_{\rm loc}(\R)\,:\, \|f\|_{\S^2}<\infty\}$ is strictly larger than 
$\S^2$. 
Introduce also the Besicovitch  semi-norm of order 2 of $f\in L^2_{\rm loc}(\R)$ 
 \begin{equation}\label{Besicovitch}
\|f\|_{\mathcal B^2}=\limsup_{T\to \infty}\Big( \frac{1}{2T} \int_{-T}^T |f(t)|^2
\dd t\Big)^{1/2}.\end{equation}
 
  For every $\lambda\in \R$ and every  $f\in L^1_{\rm loc}(\R)$ define the 
  Fourier coefficient $\hat f(\l)$ of exponent $\l$ of $f$ by
\begin{equation}\label{coeff}
\widehat f(\l)=\lim_{T\to \infty} \frac{1}{2T} \int_{-T}^{T} f(x) {\rm e}^{-i\l x } \dd x\, , 
\end{equation}
whenever the limit exists. 
 It is easily seen, by approximating by generalized trigonometric polynomials in the Stepanov norm, that the above limit  exists for every $f\in \S^2$ and every $\lambda\in \R$. Moreover, 
for any finite family $\lambda_1,\ldots ,\lambda_n\in \R$,  we have by Parseval equation in $\mathcal B^2$ (\cite[p.\,109]{B}),
$$\sum_{k=1}^n |\hat f(\l_k)|^2\le \|f\|_{\mathcal B^2}^2\le \|f\|_{\S^2}^2 .  $$

\smallskip

In particular, for $f\in \S^2$,   $\L:=\{\lambda \in \R\, :\, \hat f(\lambda)
\neq 0\}$ is countable. We shall call $\L$ the   (set of) Fourier exponents of $f$. 
Let $f\in \S^2$ with set of Fourier exponents $\L$. We have
\begin{equation}\label{Bessel}
\sum_{\l\in \L} |\hat f(\l)|^2\le \|f\|_{\mathcal B^2}^2\le \|f\|_{\S^2}^2\, .
\end{equation}

We then define formally the Fourier series of 
$f\in \S^2$ as 

$$
\sum_{\l\in \L}\hat f(\l)e^{i\lambda\cdot}\, .
$$
 Notice that the set $\L\cap[-A,A]$ may be infinite for a given $A>0$.  

\vskip 2 pt

In this paper we are   interested in  the convergence of the Fourier series of $f$ (to $f$) either in the 
Stepanov sense or in the almost everywhere sense,  and the same sort of consideration will motivate us in the study of associated series of dilates. This second question is actually our main objective. See Section \ref{s3}.
\vskip 3pt
  Concerning convergence of the Fourier series, it is necessary to recall    Bredihina's  extension to  $\S^2$ of Kolmogorov's theorem asserting that if $s_n(x)$ are the partial sums of the Fourier series of a function $f\in L^2(\T)$, then $s_{m_n}(x)$ converges almost everywhere to $f$ provided that $m_{n+1}/m_n \ge q >1$.     
  Bredihina showed in  \cite{Bredihina}    that the Fourier series of a function in $\S^2$ with  $\alpha$-separated frequencies ($\alpha>0$), namely $|\lambda_k-\lambda_\ell|\ge \a>0$ for all $k,\ell$, $k\not=\ell$,  converges almost everywhere along any 
exponentially increasing subsequence. That is, for every 
$\rho>1$, the sequence $\{\sum_{1\le k\le \rho^n} \hat f(\lambda_k){\rm e}
^{i\lambda_k t}, n\ge 1\}$ converges for $\lambda$-almost every $t\in \R$. 
 The corresponding maximal inequality has been recently obtained by 
Bailey \cite{Ba} who also considered Stepanov spaces of higher order.

  \begin{remark} \label{marcin}   For a  short proof of Kolmogorov's Theorem see Marcinkiewicz \cite{M},  who showed that this follows from  Fejer's Theorem (\cite[Th. 3.4-(III)] {Z}) and  the classical fact that if a series $\sum u_n$ with partial sums $s_n$ has infinitely many  lacunary gaps and is summable $(C,1)$ to sum $s$, then $s_n\to s$.  See Theorem 1.27 in Chapter III of \cite{Z}.
\end{remark}


In view of Carleson's theorem, a natural question is whether the "full" series converges for any $f\in \S^2$. 
 
 That question has been addressed in the very specific situation 
of Dirichlet series by Hedenmalm and Saksman \cite{HS}. A simplified proof may be found in Konyagin and Queff\'elec \cite{KQ} (see also below). 
They proved the following. Let  $\l$ denote here and throughout the Lebesgue measure on the real line.

\begin{theorem}
Let $(a_n)_{n\ge 1}$ be complex numbers such that $\sum_{n\ge 1}n|a_n|^2
<\infty$. Then the series $\sum_{n\ge 1}a_nn^{it}$ converges   
$\lambda$-almost everywhere. 
\end{theorem}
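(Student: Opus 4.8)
The plan is to recognize $\sum_{n\ge 1} a_n n^{it}$ as an almost periodic Fourier series and to reduce the statement to the general maximal inequality and $\lambda$-almost everywhere convergence result announced above. Writing $n^{it}={\rm e}^{it\log n}$, I set $\lambda_n=\log n$. Then $(\lambda_n)_{n\ge 1}$ is strictly increasing with $\lambda_n\uparrow\infty$, so the series is exactly of the form $\sum_{n\ge1}a_n{\rm e}^{i\lambda_n t}$ treated above. Consequently it suffices to check that the hypothesis $\sum_{n\ge1}n|a_n|^2<\infty$ forces Wiener's condition
\begin{equation*}
\sum_{m\ge 1}\Big(\sum_{k\,:\,m\le\lambda_k\le m+1}|a_k|\Big)^2<\infty ,
\end{equation*}
after which the general result yields both the maximal inequality and $\lambda$-almost everywhere convergence with no further work.

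For the verification, I observe that $m\le\log k\le m+1$ is equivalent to ${\rm e}^m\le k\le{\rm e}^{m+1}$, a block containing at most ${\rm e}^m({\rm e}-1)+1\le C\,{\rm e}^m$ integers, on each of which $k\ge{\rm e}^m$. Applying Cauchy--Schwarz block by block then gives
\begin{equation*}
\Big(\sum_{k\,:\,{\rm e}^m\le k\le{\rm e}^{m+1}}|a_k|\Big)^2
\le C\,{\rm e}^m\sum_{k\,:\,{\rm e}^m\le k\le{\rm e}^{m+1}}|a_k|^2
\le C\sum_{k\,:\,{\rm e}^m\le k\le{\rm e}^{m+1}}k\,|a_k|^2 ,
\end{equation*}
where the last step uses ${\rm e}^m\le k$ inside the block. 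Summing over $m\ge1$ and noting that each integer $k$ lies in at most two of the closed intervals $[m,m+1]$, the right-hand side is bounded by $2C\sum_{k\ge1}k|a_k|^2<\infty$; the finitely many indices $k$ with $\log k<1$ are irrelevant, since they neither appear in the Wiener sum nor affect convergence of the series. This establishes Wiener's condition and hence the theorem.

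I do not expect a genuine obstacle here: the analytic difficulty has been absorbed into the general maximal inequality, and what remains is only the combinatorial matching of the exponentially growing blocks ${\rm e}^m\le k\le{\rm e}^{m+1}$ to the weight $n$ appearing in $\sum_n n|a_n|^2$. The single point deserving care is that this matching is sharp: the block sizes grow like $k$ precisely because $\lambda_n=\log n$, so the weight $n$ (rather than, say, $n^{1+\varepsilon}$) is exactly what the Wiener condition demands. This is why the Hedenmalm--Saksman theorem appears as the natural special case of the general framework, with no loss in the exponent.
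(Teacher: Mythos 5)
Your proof is correct, but it runs in the opposite direction to the paper's. Your verification of Wiener's condition \eqref{Wiener-cond} for $\lambda_k=\log k$ is sound: the block $\{k\,:\, m\le \log k\le m+1\}$ has at most ${\rm e}^m({\rm e}-1)+1$ elements, each with $k\ge {\rm e}^m$, so Cauchy--Schwarz gives $\big(\sum_{k}|a_k|\big)^2\le C\sum_{k} k|a_k|^2$ blockwise, and summing over $m$ (overlap at most $2$; the indices $k=1,2$ with $\log k<1$ are harmless, as is the fact that $\lambda_1=0$ fails the strict positivity in Theorem \ref{gen-prop} --- just drop the constant term $a_1$) yields \eqref{Wiener-cond} from $\sum_n n|a_n|^2<\infty$. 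Granting Theorem \ref{gen-prop} as a black box, this is a complete deduction, and it is exactly the specialization the paper has in mind when it says the general result contains Hedenmalm--Saksman. But the paper's own proof of this statement is the direct Konyagin--Queff\'elec argument of Proposition \ref{prop-KQ}: a translation trick ($\sum_k a_kk^{i(t+x)}=\sum_k (a_kk^{ix})k^{it}$) reduces the $\S^2$ bound to $L^2[0,1]$; one sets $h\equiv a_n$ on $[n,n+1)$ and writes the partial sum as the truncated Fourier integral $\int_0^{\log(N+1)}{\rm e}^xh({\rm e}^x){\rm e}^{itx}\,dx$ plus an error summed absolutely via $|{\rm e}^{it\log n}-{\rm e}^{it\log x}|\le t/n$ and Cauchy--Schwarz; the strong $(2,2)$ bound for the Carleson maximal operator $C^*$ for Fourier integrals then applies because $\int_0^{\infty}{\rm e}^{2x}|h({\rm e}^x)|^2\,dx\le\sum_n(n+1)|a_n|^2$. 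This matters for the logical architecture: Theorem \ref{gen-prop} is deduced from Theorem \ref{prop-dirichlet}, which is in turn built on Proposition \ref{prop-KQ}, so your route --- while not circular in strict logic, since the whole chain bottoms out in the Carleson--Hunt theorem --- passes through machinery whose construction already contains a direct proof of a strengthening of the statement, namely the maximal inequality \eqref{inemax-diri}; it could not replace the paper's proof without inverting the order of the development. What your route buys is strength for free: your computation in fact proves a.e. convergence under the strictly weaker block condition of \eqref{weak-cond} type. What the paper's direct proof buys is that it is the foundational step on which everything else, including the theorem you invoke, rests.
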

Their condition is optimal when $(a_n)_{n\ge 1}$ is non-increasing. 
However, if $(a_n)_{n\ge 1}$ is supported say on $\{2^n\, :\, n\ge N\}$ 
the corresponding series is a standard (periodic) trigonometric 
series and in that case, the optimality is lost, since the condition is much stronger than Carleson's condition. 

On the other hand, it follows from Wiener \cite{Wi}  that the series $\sum_{n\ge 1}a_nn^{it}$ 
converges in $\S^2$ provided that 
\begin{equation}\label{weak-cond}
 \sum_{n\ge 0}  \Big(\sum_{k=2^n}^{2^{n+1}-1} |a_k|\Big)^2 <\infty\, .
 \end{equation}
More precisely, the sequence of partial sums converges in $ \S^2$ to a limit $f\in \S^2$. If $a_n>0$ for every $n$,   the converse is also true, see Tornehave \cite{To}.
  \vskip 2 pt
 Our first goal (see the next section) is to prove that \eqref{weak-cond} is sufficient for 
 $\lambda$-a.e. convergence and to provide the corresponding maximal inequality. Moreover, it will turn out 
 that the problem of the $\lambda$-almost everywhere convergence 
 of series $\sum_{n\ge 1} a_n {\rm e}^{i\lambda_n t}$ can be reduced to 
 the study of Dirichlet series. 
 
 \vskip 1 pt In doing so, we obtain a Carleson-type theorem for almost periodic series and make the link with the study of 
 almost everywhere convergence of the Fourier series associated with 
 Stepanov's almost periodic functions.
 \vskip 2 pt

Then, in Section \ref{s3}, we consider associated series of dilates and obtain a sufficient condition for almost everywhere convergence. We further prove 
an  interpolation theorem. 
Finally, in Section \ref{s5}, we obtain a general necessary condition for the convergence almost everywhere of series of functions. The condition involves correlations of order $4$.   
 As an application, we show for instance that if $\{\l_k, k\ge 1\}$ is a Sidon sequence, and the series $\sum_{k} a_k {\rm e}^{i\l_kt}$ converges on a set of positive $\l$-measure, then the
series  $\sum_{k=1}^\infty |a_k|^2$ converges.

  

\section{Almost everywhere convergence of almost periodic Fourier 
series}\label{s2}   
 
 We start with the proof by Konyagin and Queff\'elec of 
 Hedenmalm and Saksman's result, to which we add a maximal inequality.

\begin{prop}\label{prop-KQ}
There exists $C>0$ such that for any sequence  $(a_n)_{n\ge 1}$ of complex numbers such that $\sum_{n\ge 1}n|a_n|^2
<\infty$, 
\begin{equation}\label{inemax-diri}
\Big\| \sup_{n\ge 1}|\sum_{k=1}^n a_k k^{i\cdot }|\, \Big\|_{\S^2}\le C \,
\Big(\sum_{n\ge 1}n|a_n|^2\Big)^{1/2}\, .
\end{equation}
\end{prop}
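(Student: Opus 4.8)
The plan is to reach the Carleson–Hunt maximal theorem after a transference that turns the discrete Dirichlet partial sums into one‑sided truncations of a Fourier integral on $\R$. First I would eliminate the supremum over $x$ in the Stepanov norm by a phase argument. Writing $t=x+s$ with $s\in[0,1]$ and $k^{it}=(k^{ix})\,k^{is}$, the quantity $\int_x^{x+1}\sup_n|\sum_{k=1}^n a_k k^{it}|^2\,\dd t$ equals the same expression on $[0,1]$ for the rephased coefficients $a_k':=a_k k^{ix}$. Since $|a_k'|=|a_k|$, one has $\sum_n n|a_n'|^2=\sum_n n|a_n|^2$, and as the target bound depends only on this phase‑invariant quantity it suffices to prove
\begin{equation*}
\int_0^1 \sup_{n\ge1}\Big|\sum_{k=1}^n a_k k^{is}\Big|^2\,\dd s\le C\sum_{k\ge1}k|a_k|^2
\end{equation*}
uniformly over all coefficient sequences; taking the supremum over $x$ afterwards recovers the $\S^2$ bound.

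Next I would linearise the partial sums. Let $\psi_k=(\log\tfrac{k+1}{k})^{-1}\mathbf{1}_{[\log k,\log(k+1)]}$ be the indicator of $[\log k,\log(k+1)]$ normalised to unit mass, and set $g=\sum_{k\ge1}a_k\psi_k$, a step function on $[0,\infty)$. The supports are disjoint and tile $[0,\infty)$, so $\|g\|_{L^2(\R)}^2=\sum_k|a_k|^2(\log\tfrac{k+1}{k})^{-1}\le C\sum_k k|a_k|^2$, using $\log(1+1/k)\ge 1/(k+1)$; note that it is precisely the frequency gaps $\log(1+1/k)\approx 1/k$ that force the weight $k$ to appear, which is why $\sum_k k|a_k|^2$ is the natural quantity. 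Since $k^{is}=e^{is\log k}$ and $\psi_k$ has unit mass, for $L_n:=\log(n+1)$ one gets $\int_{-\infty}^{L_n}g(\xi)e^{is\xi}\,\dd\xi=\sum_{k=1}^n a_k e^{is\log k}c_k(s)$ with $c_k(s)=\int\psi_k(\xi)e^{is(\xi-\log k)}\,\dd\xi$. Hence $\sum_{k=1}^n a_k k^{is}=\int_{-\infty}^{L_n}g(\xi)e^{is\xi}\,\dd\xi-E_n(s)$, where $E_n(s)=\sum_{k=1}^n a_k e^{is\log k}(c_k(s)-1)$ is an error term coming from replacing $e^{is\xi}$ by $e^{is\log k}$ across the small support of $\psi_k$.

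Finally I would estimate the two pieces. Dominating $\sup_n$ of the integral term by the one‑sided Carleson operator $\mathcal Cg(s)=\sup_{L}|\int_{-\infty}^{L}g(\xi)e^{is\xi}\,\dd\xi|$ (legitimate since $\{L_n\}\subset\R$), the Carleson–Hunt theorem for Fourier integrals on $\R$ gives $\int_0^1|\mathcal Cg|^2\,\dd s\le\int_\R|\mathcal Cg|^2\,\dd s\le C\|g\|_{L^2(\R)}^2\le C\sum_k k|a_k|^2$. For the error, since $|\xi-\log k|\le\log(1+1/k)$ on the support of $\psi_k$ and $s\in[0,1]$, we have $|c_k(s)-1|\le\log(1+1/k)\le 1/k$, so $\sup_n|E_n(s)|\le\sum_k|a_k|/k\le C(\sum_k k|a_k|^2)^{1/2}$ by Cauchy–Schwarz (using $\sum_k k^{-3}<\infty$), uniformly in $s$. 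Adding the two bounds yields the reduced inequality, and with the first step the proposition. The substantive input is the Carleson–Hunt maximal inequality; everything else — the phase reduction, the bump transference that produces exactly the weight $k$, and the elementary control of $c_k(s)-1$ — requires care but presents no real difficulty, the only point worth checking being that the supremum over the discrete cutoffs $L_n$ is absorbed into the full one‑sided Carleson operator.
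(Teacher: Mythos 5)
Your proposal is correct and follows essentially the same route as the paper: the identical phase reduction $k^{i(x+s)}=(k^{ix})k^{is}$ to an $L^2[0,1]$ bound, a transference of the partial sums to one-sided truncations of the Fourier integral of an $L^2(\R)$ function carried by the intervals $[\log k,\log(k+1)]$, and the Carleson--Hunt maximal inequality for Fourier integrals, with the $O(1/k)$ phase error killed by Cauchy--Schwarz against $\sum_k k^{-3}$. The only cosmetic difference is that you build $g$ directly in the logarithmic variable with mass-normalized indicators, whereas the paper sets $h=a_n$ on $[n,n+1)$ and substitutes $x={\rm e}^u$, which yields your $g$ up to bounded factors.
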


Before giving the proof, it is necessary to recall some classical but important facts. Let $g\in L^p(\T)$, $1<p<\infty$. Consider the maximal operator 
$$T^* g(x) = \sup_{L=0}^\infty \Big|\sum_{|k|\le L}\widehat{g}(k) {\rm e}^{2i\pi k x}\Big| . $$
For $f\in L^p(\R)$ consider analogously the maximal operator
$$C^* f(x) = \sup_{T>0}  \Big|\int_{-T}^T \widehat{f}(t) {\rm e}^{i xt}\dd t\Big| . $$
An operator $U$ on $L^p$ is  said  strong $(p,p)$  if $\|Uf\|_p\le C_\p \|f\|p$ for all $f\in L^p$. The fact that strong $(p,p)$, $1<p<\infty$, for $T^*$ is equivalent to strong $(p,p)$ for $C^*$ follows from known elementary arguments (see \cite[p.\,166]{AC}). We refer to \cite[Theorem 1]{Hu} concerning the deep fact that $T^*$ is strong  $(p,p)$, $1<p<\infty$ and we shall call it "the Carleson-Hunt theorem" when $p=2$. We will freely use the fact the $C^*$ is consequently strong  $(p,p)$, $1<p<\infty$.
\begin{proof} We first notice that it is enough to prove that 
\begin{equation}\label{est}
\Big\| \sup_{n\ge 1}|\sum_{k=1}^n a_k k^{i\cdot }|\, \Big\|_{L^2[0,1]}\le C 
\Big(\sum_{n\ge 1}n|a_n|^2\Big)^{1/2}\, .
\end{equation}
Indeed, then the desired result follows from the fact that 
$$
\sum_{k=1}^n a_k k^{i(t+x)}= \sum_{k=1}^n (a_kk^{ix}) k^{it}\, ,
$$
since we may apply the above estimate to the sequence 
$(a_nn^{ix})_{n\ge 1}$ whose modulus are the same as the ones of the sequence $(a_n)_{n\ge 1}$.

Let us prove \eqref{est}. Define $h\in L^2(\R)$ by setting 
$h\equiv 0$ on $(-\infty, 1)$ and for every  $n\in \N$, 
$h(x)=a_n$ whenever $x\in [n,n+1)$. 

\smallskip

Let $N\ge 1$. We have 

\begin{gather*}
\sum_{n=1}^N a_n n^{it}= \sum_{n=1}^N a_n \int_n^{n+1}\big( 
{\rm e}^{it \log n}-{\rm e}^{it\log x}\big) dx + \int_1
^{N+1}h(x) {\rm e}^{it\log x}dx\\
=\sum_{n=1}^N a_n \int_n^{n+1}\big( 
{\rm e}^{it \log n}-{\rm e}^{it\log x}\big) dx + \int_0^{\log (N+1)} {\rm e}^x
h({\rm e}^x) {\rm e}^{itx}
dx\, .
\end{gather*}
Now, for every $x\in [n,n+1)$, 
$$
\big|{\rm e}^{it \log n}-{\rm e}^{it\log x}\big|\le \frac{t}{n}\, .
$$
Hence, 
$$
\sum_{n\ge1}\Big| a_n \int_n^{n+1}\big( 
{\rm e}^{it \log n}-{\rm e}^{it\log x}\big) dx\Big|
\le t \Big(\sum_{n\ge 1}n|a_n|^2\Big)^{1/2} \Big(\sum_{n\ge 1}\frac{1}{n^3}\Big)^{1/2}
\, .
$$

On another hand, $\int_0^{+\infty} {\rm e}^{2x}|h|^2({\rm e}^x)dx 
= \int_1^{+\infty} u|h|^2(u)du \le  \sum_{n\ge 1}(n+1)|a_n|^2
<\infty$. Hence, since $C^*$ is strong $(2-2)$, 
$$\Big\|\sup_{N\ge 1}\big|  \int_0^{\log (N+1)} {\rm e}^xh({\rm e}^x) {\rm e}^{itx}
dx\big|\, \Big\|^2_{2,dt}\le C \int_0^{+\infty} {\rm e}^{2x}|h|^2({\rm e}^x)dx \, .
$$
Hence \eqref{inemax-diri} follows. \end{proof}


 
 We now derive an improved version of Proposition \ref{prop-KQ}.

 \begin{theorem}\label{prop-dirichlet}
There exists $C>0$, such that for every sequence $(a_n)_{n\ge 1}$  of complex numbers satisfying 
 \eqref{weak-cond},
 \begin{equation}\label{inemax-diri-2}
\Big\| \sup_{n\ge 1}|\sum_{k=1}^n a_k k^{i\cdot }|\, \Big\|_{\S^2}\le C 
\Big(\sum_{n\ge 0}(\sum_{k=2^n}^{2^{n+1}-1}|a_k|)^2\Big)^{1/2}\, .
\end{equation}  
Moreover,  $\sum_{n\ge 1}a_n n^{it}$ converges for $\lambda$-a.e. $t\in \R$. 
 \end{theorem}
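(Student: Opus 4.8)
The plan is to reduce the full maximal inequality to the Carleson--Hunt bound for the operator $C^*$, applied not to the function $\phi(u)={\rm e}^u h({\rm e}^u)$ of Proposition \ref{prop-KQ} itself (which only recovers the hypothesis $\sum_k k|a_k|^2<\infty$), but to two auxiliary functions manufactured from its dyadic-block structure, each with $L^2(\R)$-norm controlled by the right-hand side of \eqref{inemax-diri-2}. Throughout write $c_j:=\sum_{k=2^j}^{2^{j+1}-1}|a_k|$, so that the hypothesis \eqref{weak-cond} reads $\sum_{j\ge0}c_j^2<\infty$. Exactly as in Proposition \ref{prop-KQ}, it suffices to prove the estimate with $\|\cdot\|_{L^2[0,1]}$ in place of $\|\cdot\|_{\S^2}$: the modulation $a_k\mapsto a_kk^{ix}$ leaves each $c_j$ unchanged and $\|\cdot\|_{\S^2}$ is the supremum over $x$ of the translated $L^2[0,1]$-norms. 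I would then split $\sup_N$ according to the dyadic block containing $N$. Setting $B_j(t)=\sum_{k=2^j}^{2^{j+1}-1}a_kk^{it}$, for $2^m\le N<2^{m+1}$ one has $\sum_{k=1}^N a_kk^{it}=\sum_{j=0}^{m-1}B_j(t)+\sum_{k=2^m}^N a_kk^{it}$, whence pointwise
\begin{equation*}
\sup_{N\ge1}\Big|\sum_{k=1}^N a_kk^{it}\Big|\le \sup_{m\ge1}\Big|\sum_{j=0}^{m-1}B_j(t)\Big|+\sup_{m\ge0}c_m .
\end{equation*}
Since $\sup_m c_m\le(\sum_j c_j^2)^{1/2}$, only the maximal function of the dyadic partial sums $\sum_{j<m}B_j=\sum_{k=1}^{2^m-1}a_kk^{it}$ remains.

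For the latter I would use the integral representation of Proposition \ref{prop-KQ} with $N=2^m-1$, so that the upper limit is exactly $m\log2$: with $h=a_n$ on $[n,n+1)$ and $\phi(u)={\rm e}^u h({\rm e}^u)$,
\begin{equation*}
\sum_{k=1}^{2^m-1} a_kk^{it}=E_m(t)+\int_0^{m\log2}\phi(u){\rm e}^{itu}\,\dd u,\qquad E_m(t)=\sum_{k=1}^{2^m-1}a_k\int_k^{k+1}\big(k^{it}-x^{it}\big)\,\dd x .
\end{equation*}
Using $|k^{it}-x^{it}|\le|t|/k$ on $[k,k+1)$ gives $|E_m(t)|\le|t|\sum_{k\ge1}|a_k|/k\le|t|\sum_j 2^{-j}c_j\le|t|\big(\sum_j c_j^2\big)^{1/2}\big(\sum_j 2^{-2j}\big)^{1/2}$, so $\sup_m|E_m(t)|\lesssim(\sum_j c_j^2)^{1/2}$ on $[0,1]$. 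Moreover, writing $I_j=[j\log2,(j+1)\log2)$, one has $\int_{I_j}|\phi|=\sum_{k=2^j}^{2^{j+1}-1}|a_k|=c_j$, so $\phi$ has $L^1$-mass $c_j$ on each block.

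The crux is the maximal Fourier integral $\sup_m|\int_0^{m\log2}\phi\,{\rm e}^{itu}\dd u|$: bounding it by $C^*\phi$ is useless, since $\|\phi\|_2^2=\int_1^\infty u|h(u)|^2\dd u\approx\sum_k k|a_k|^2$ is precisely the stronger hypothesis. The device I would use is a blockwise mean-zero splitting $\phi=\bar\phi+w$, where $\bar\phi$ is the constant $\frac1{\log2}\int_{I_j}\phi$ on each $I_j$, so that $w$ has mean zero on every $I_j$. For $\bar\phi$ one has $\|\bar\phi\|_2^2=\frac1{\log2}\sum_j|\int_{I_j}\phi|^2\le\frac1{\log2}\sum_j c_j^2$, so the strong $(2,2)$ bound for $C^*$ gives $\|\sup_m|\int_0^{m\log2}\bar\phi\,{\rm e}^{itu}\dd u|\|_{L^2(\R)}\le C(\sum_j c_j^2)^{1/2}$. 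For $w$, set $W(u)=\int_{j\log2}^u w$ for $u\in I_j$; because $w$ is mean-zero on each block, $W$ is continuous and vanishes at every $j\log2$, so integration by parts on each $I_j$ annihilates all boundary terms and yields
\begin{equation*}
\int_0^{m\log2}w(u){\rm e}^{itu}\,\dd u=-it\int_0^{m\log2}W(u){\rm e}^{itu}\,\dd u .
\end{equation*}
On $[0,1]$ the factor $|t|\le1$ is harmless, and $|W|\le\int_{I_j}|w|\le 2c_j$ on $I_j$ gives $\|W\|_2^2\le4\log2\sum_j c_j^2$; applying the strong $(2,2)$ bound for $C^*$ once more bounds this contribution by $C(\sum_j c_j^2)^{1/2}$ as well. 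Identifying this mean-zero / integration-by-parts trick --- which trades the uncontrolled $\|\phi\|_2$ for the controlled $\|\bar\phi\|_2$ and $\|W\|_2$ --- is the main obstacle; collecting the three contributions proves \eqref{inemax-diri-2}.

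Finally, $\lambda$-a.e.\ convergence follows from the maximal inequality by the standard tail argument. For the tail sequence $a^{[M]}=(a_k)_{k>M}$, and any $N,N'\ge M$, $\sum_{k=1}^N a_kk^{it}-\sum_{k=1}^{N'}a_kk^{it}$ equals the corresponding difference for $a^{[M]}$, so it is dominated by $2\sup_N|\sum_{k=1}^N a^{[M]}_kk^{it}|$; applying \eqref{inemax-diri-2} to $a^{[M]}$ bounds the $\S^2$-norm of this oscillation by $C\big(\sum_{2^{j+1}>M}c_j^2\big)^{1/2}\to0$ as $M\to\infty$. Hence on every interval $[x,x+1]$ the partial sums are $\lambda$-a.e.\ Cauchy, and since finite sums converge everywhere, $\sum_k a_kk^{it}$ converges for $\lambda$-a.e.\ $t\in\R$.
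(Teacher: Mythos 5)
Your proof is correct, and while it shares the paper's outer skeleton (the modulation trick reducing the $\S^2$-bound to an $L^2[0,1]$-bound, and the reduction of the full maximal function to the dyadic subsequence $N=2^m-1$ at the cost of $\sup_m c_m$), the core of your argument is genuinely different. The paper works on the discrete side: inside each block $[2^n,2^{n+1})$ it performs an Abel summation, producing boundary terms $S_{2^{n+1}-1,n}\,2^{(n+1)it}$ --- a bona fide periodic trigonometric series in $t\log 2$ with $\ell^2$ coefficients, handled by Carleson's theorem for series ($T^*$) --- plus a within-block fluctuation $\sum_k S_{k,n}\big(k^{it}-(k+1)^{it}\big)$, whose main part $\sum_k \frac{S_{k,n}}{k}k^{it}$ satisfies the Hedenmalm--Saksman condition and is disposed of by citing Proposition \ref{prop-KQ} as a black box. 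You instead stay entirely on the Fourier-integral side: you borrow only the integral representation from the \emph{proof} of Proposition \ref{prop-KQ}, exploit the alignment of the truncation points $m\log 2$ with the blocks $I_j$, and replace Abel summation by its continuous analogue --- the blockwise mean-zero splitting $\phi=\bar\phi+w$ followed by integration by parts. Structurally the two decompositions correspond: your $\bar\phi$ carries the block sums ($\int_{I_j}\phi=S_{2^{j+1}-1,j}$), playing the role of the paper's boundary series, and your identity $\int_0^{m\log 2}w\,{\rm e}^{itu}\dd u=-it\int_0^{m\log 2}W\,{\rm e}^{itu}\dd u$ is the exact analogue of the paper's $-\frac{it}{k}{\rm e}^{it\log k}$ term; but you use only the strong $(2,2)$ bound for $C^*$, applied twice, never $T^*$ nor the statement of Proposition \ref{prop-KQ}. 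All the estimates check out: $\int_{I_j}|\phi|=c_j$, $\|\bar\phi\|_2^2\le (\log 2)^{-1}\sum_j c_j^2$, $|W|\le 2c_j$ on $I_j$ so $\|W\|_2^2\le 4\log 2\sum_j c_j^2$, the vanishing of $W$ at the points $m\log 2$ kills all boundary terms, and the factor $|t|\le 1$ is indeed harmless after the reduction to $[0,1]$; your tail argument for a.e.\ convergence is the standard procedure the paper only sketches. As for what each approach buys: the paper's route makes Remark \ref{2.3} transparent (the boundary series exhibits directly that the theorem contains Carleson--Hunt for series) and is modular over Proposition \ref{prop-KQ}; yours is more self-contained and arguably cleaner, since one device --- the mean-zero correction plus integration by parts, trading the uncontrolled $\|\phi\|_2$ (which is comparable to $\sum_k k|a_k|^2$) for the controlled $\|\bar\phi\|_2$ and $\|W\|_2$ --- handles everything with a single maximal operator.
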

\begin{remarks} \label{2.3} The proof of Theorem \ref{prop-dirichlet} 
makes use of Carleson-Hunt's theorem ($T^*$ is strong ($2-2$)) and of Proposition 
\ref{prop-KQ}. The latter was proved  using that $C^*$ is strong ($2-2$), which is  equivalent 
to Carleson-Hunt's theorem. On the other hand, 
given any sequence $(b_n)_{n\ge 1}\in \ell^2$, applying Theorem \ref{prop-dirichlet} with $(a_n)_{n\ge 1}$ such that $a_{2^k}=b_k$ and 
$a_n=0$ otherwise, we see that Theorem \ref{prop-dirichlet} implies 
Carleson-Hunt's theorem, hence is equivalent to it. 
We shall see below that Theorem \ref{prop-dirichlet} allows to 
treat almost everywhere convergence of series $\sum_{n\ge 1} 
b_n{\rm e}^{it\lambda_n }$ for non-decreasing sequences $(\lambda_n)_{n\ge 1}$. Notice that Theorem \ref{prop-dirichlet} 
corresponds to the case where $\lambda_n=\log n$. 
 For more on Carleson-Hunt's theorem we refer to Lacey \cite{La1}. See also J\o rsboe and Mejlbro \cite{JM}.
\end{remarks}
\begin{proof} As in the previous proof, it is enough to 
  prove a maximal inequality in $L^2([0,1])$. 
 We shall first work along the  subsequence $(2^n-1)_{n\ge 1}$. 
 \vskip 2pt 
 Let $n\ge 1$ and define $S_{k,n}:=\sum_{\ell=2^n }^k a_k$ for every 
$2^n \le k\le 2^{n+1}-1$ and $S_{2^n-1,n}=0$. In particular, 
for every $2^n \le k\le 2^{n+1}-1$, 
$$
|S_{k,n}|\le \sum_{j=2^n}^{2^{n+1}-1} |a_j|\, ,
$$
a fact that should be used freely in the sequel.

\smallskip
By Abel summation by part, 
we have 
 \begin{gather*}
\sum_{k=2^n}^{2^{n+1}-1} a_kk^{it} =\sum_{k=2^n}^{2^{n+1}-1} (S_{k,n}-S_{k-1,n})k^{it} = \sum_{k=2^n}^{2^{n+1}-1} S_{k,n}(k^{it}-(k+1)^{it}) 
+ 2^{(n+1)it}S_{2^{n+1}-1,n}\, .
\end{gather*}
 Since $2^{(n+1)it}={\rm e}^{i(n+1)t\log 2}$ and by our assumption 
$\sum_{n\ge 1} |S_{2^{n+1}-1,n}|^2 <\infty$, it follows from Carleson's theorem that 
$$
\Big\|\sup_{N\ge 1} \sum_{n=1}^N  S_{2^{n+1}-1,n}2^{(n+1)it} \Big\|_{L^2([0,1],dt)}
\le C \Big( \sum_{n\ge 1} |S_{2^{n+1}-1,n}|^2\Big)^{1/2}\, .
$$


\smallskip

Hence, we are back to control the $L^2$-norm of 
$$\sup_{N\ge 1}\Big|\sum_{n= 1}^N \sum_{k=2^n}^{2^{n+1}-1} S_{k,n}(k^{it}-(k+1)^{it})\Big|\, .
$$
 But  we have, 
\begin{eqnarray*}
k^{it}-(k+1)^{it }&=&{\rm e}^{it\log k}-{\rm e}^{it\log(k+1)}\cr &=&
{\rm e}^{it\log k}(1-{\rm e}^{it\log (1+1/k)}+\frac{it}{k})
- \frac{it}{k}{\rm e}^{it\log k}\ = \ u_k(t)-\frac{it}{k}{\rm e}^{it\log k}\, .
\end{eqnarray*}

Now there exists $C>0$ such that $|u_k(t)|\le \frac{C(t+t^2)}{k^2}$. 
Hence, 
\begin{eqnarray*}
\sum_{n\ge 1} \sum_{k=2^n}^{2^{n+1}-1} |S_{k,n}|\, |u_k(t)|
&\le &C (t+t^2)\sum_{n\ge 1}\frac{\sum_{k=2^n}^{2^{n+1}-1}|a_k|}{2^n}\\
\cr &\le &C(t+t^2)\Big(\sum_{n\ge 1}\big(\sum_{k=2^n}^{2^{n+1}-1}|a_k|
\big)^2\Big)^{1/2}\, .
\end{eqnarray*}

\smallskip

It remains to control
 $$
\sup_{N\ge 1}\Big|\sum_{n= 1}^N \sum_{k=2^n}^{2^{n+1}-1} \frac{S_{k,n}}{k} {\rm e}^{it\log k}\Big|
\, .
$$
 But we are exactly in the situation of Proposition \ref{prop-KQ}. 
Hence  
 \begin{eqnarray*}
\Big\| \sup_{N\ge 1}\Big|\sum_{n= 1}^N \sum_{k=2^n}^{2^{n+1}-1} \frac{S_{k,n}}{k} {\rm e}^{it\log k}\Big|\Big\|_{L^2([0,1],dt)}&\le C &\Big(\sum_{n\ge 1}
\sum_{k=2^n}^{2^{n+1}-1} k\frac{|S_{k,n}|^2}{k^2} \Big)^{1/2} 
\cr  &\le &\Big(\sum_{n\ge 1}\big(\sum_{k=2^n}^{2^{n+1}-1}|a_k|
\big)^2\Big)^{1/2}\  < \ \infty  .
\end{eqnarray*}
Let $n\ge 1$ and $2^n\le \ell \le 2^{n+1}-1$. We have 
$$
\Big|\sum_{k=1}^{\ell}a_n k^{it} -\sum_{k=1}^{2^n-1}a_n k^{it}\Big|
\le \sum_{k=2^n}^{2^{n+1}-1}|a_k|\, .
$$

Hence, 

$$
\sup_{N\ge 1} \Big|\sum_{n=1}^N a_n{\rm e}^{it\log n}\Big|\le 
\sup_{N\ge 1} \Big|\sum_{n=1}^{2^{N}-1} a_n{\rm e}^{it\log n}\Big|
+\Big(\sum_{n\ge 1}\big(\sum_{k=2^n}^{2^{n+1}-1}|a_k|
\big)^2\Big)^{1/2}\, .
$$
So, \eqref{inemax-diri-2} is proved. The $\lambda$-almost everywhere convergence may be proved by a standard procedure thanks 
to the maximal inequality. Alternatively, following all the steps of the proof of the maximal inequality allows to give a more direct proof. 
\end{proof}

\vskip 4 pt
As a corollary we deduce
 \begin{theorem}\label{gen-prop}
Let $(\lambda_n)_{n\ge 1}$ be an increasing sequence of positive real 
numbers tending to $\infty$.
Let $(a_n)_{n\ge 1}$ be such that 
\begin{equation}\label{Wiener-cond}
\sum_{n\ge 1} \Big(\sum_{k\, :\, n\le \lambda_k \le n+1}|a_k|\Big)^2 <\infty\, .
\end{equation}
There exists a universal constant $C>0$ such that 
\begin{equation}\label{inemax-gen}
\Big\| \sup_{N\ge 1} \big|\sum_{n=1}^Na_n{\rm e}^{i\lambda_n t}\big|
\, \Big\|_{\S^2}\le C \Big(\sum_{n\ge 1} \Big(\sum_{k\, :\, n\le \lambda_k \le n+1}|a_k|\Big)^2\Big)^{1/2}
\end{equation}
Moreover, the series $\sum_{n\ge 1} a_n{\rm e}^{it\lambda_n }$ converges 
for $\lambda$-a.e. $t\in \R$.
\end{theorem}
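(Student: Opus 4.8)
The plan is to deduce Theorem \ref{gen-prop} from the Dirichlet case, Theorem \ref{prop-dirichlet}, by approximating each frequency $\lambda_n$ by the logarithm of a nearby integer. As a first reduction, exactly as in the proof of Proposition \ref{prop-KQ}, the translation $t\mapsto t+x$ multiplies each coefficient $a_n$ by the unimodular factor $e^{i\lambda_n x}$ and leaves the right-hand side of \eqref{inemax-gen} unchanged; hence it is enough to establish the maximal inequality with the $\S^2$-norm replaced by the $L^2([0,1],\dd t)$-norm. The point of working on $[0,1]$ is that there $|t|\le 1$, which is precisely what will make the approximation below summable.

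Write $A_m:=\sum_{k\,:\,m\le\lambda_k\le m+1}|a_k|$, so that $\sum_{m\ge 1}A_m^2<\infty$ by \eqref{Wiener-cond}. Discarding the finitely many indices with $\lambda_n<1$, let $j_n$ be the nearest integer to $e^{\lambda_n}$, so that $e^{i\lambda_n t}=e^{i\lambda_n t}$ is compared with $j_n^{it}=e^{it\log j_n}$. If $m=\lfloor\lambda_n\rfloor$ then $\lambda_n\ge m\ge 1$, and a direct estimate gives $|\log j_n-\lambda_n|\le Ce^{-m}$, whence for $t\in[0,1]$ one has $|e^{i\lambda_n t}-j_n^{it}|\le |t|\,|\lambda_n-\log j_n|\le Ce^{-m}$. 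Cauchy--Schwarz then yields
$$
\sum_n |a_n|\,\sup_{t\in[0,1]}\big|e^{i\lambda_n t}-j_n^{it}\big|\le C\sum_{m\ge 1}e^{-m}A_m\le C\Big(\sum_{m\ge1}A_m^2\Big)^{1/2}<\infty,
$$
so the error series $\sum_n a_n\big(e^{i\lambda_n t}-j_n^{it}\big)$ converges absolutely and uniformly on $[0,1]$, with all its partial sums bounded in modulus by $C(\sum_m A_m^2)^{1/2}$. This disposes of the difference between the given series and the Dirichlet series $\sum_n a_n j_n^{it}=\sum_j c_j\, j^{it}$, where $c_j:=\sum_{n\,:\,j_n=j}a_n$.

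It then remains to control $\sup_{N}\big|\sum_{n\le N}a_n j_n^{it}\big|$. Since $n\mapsto e^{\lambda_n}$ is non-decreasing the integers $j_n$ are non-decreasing, so the partial sums of $\sum_n a_n j_n^{it}$ differ from those of $\sum_j c_j\, j^{it}$, summed in the natural order of $j$, only by a reordering inside a single block; each such discrepancy is at most $A_m\le(\sum_m A_m^2)^{1/2}$ in modulus, and the two maximal functions are thus comparable. Finally one checks that $(c_j)$ satisfies the hypothesis of Theorem \ref{prop-dirichlet}: because $\log j\in[m,m+1)$ precisely when $j\in[e^m,e^{m+1})$, and $|\log j_n-\lambda_n|<1$, the $\ell^1$-mass of $(c_j)$ in each unit frequency window is bounded by $A_{m-1}+A_m+A_{m+1}$, while the dyadic blocks $[2^\ell,2^{\ell+1})$ occupy frequency windows $[\ell\log 2,(\ell+1)\log 2)$ of length $\log 2$; since each such window meets only boundedly many unit windows and conversely, a bounded-overlap argument gives $\sum_\ell\big(\sum_{2^\ell\le j<2^{\ell+1}}|c_j|\big)^2\le C\sum_m A_m^2<\infty$. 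Theorem \ref{prop-dirichlet} then supplies both the maximal inequality \eqref{inemax-diri-2} for $\sum_j c_j\, j^{it}$ and its $\lambda$-a.e. convergence, and combining this with the two reductions above proves \eqref{inemax-gen} together with the $\lambda$-a.e. convergence of $\sum_n a_n e^{i\lambda_n t}$.

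The main obstacle is to verify that the rounding of $\lambda_n$ to $\log j_n$ is harmless, and this rests on two quantitative points. The first is the approximation error, where the gain $e^{-m}$ on the $m$-th block is exactly strong enough to convert the $\ell^1$ Wiener data into a finite bound via Cauchy--Schwarz against $(\sum_m A_m^2)^{1/2}$; note that this is where restricting to $t\in[0,1]$ is indispensable, since the factor $|t|$ is unbounded on $\R$. The second is the comparison of the unit-interval grouping of \eqref{Wiener-cond} with the $\log 2$-grouping built into Theorem \ref{prop-dirichlet}, a routine but essential bounded-overlap estimate; the within-block reordering of partial sums, controlled crudely by $A_m$, is the remaining technical point.
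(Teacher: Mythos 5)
Your proof is correct and takes essentially the same route as the paper's: both arguments reduce to Theorem \ref{prop-dirichlet} by rounding each frequency $\lambda_n$ to the logarithm of a nearby integer (grouping coefficients that collide onto the same integer, and controlling the rounding error and the within-block discrepancy exactly as you do), after the same translation reduction to $L^2([0,1])$. The only difference is cosmetic: the paper rounds in base $2$, taking $u_n=[2^{\lambda_n}]$ so that the unit windows of \eqref{Wiener-cond} match the dyadic blocks of \eqref{inemax-diri-2} exactly (at the price of an implicit rescaling $t\mapsto t/\log 2$), whereas your base-$e$ rounding applies Theorem \ref{prop-dirichlet} verbatim but requires your routine bounded-overlap comparison of unit and $\log 2$ windows.
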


\begin{proof} Write $u_n:=[2^{\lambda_n}]$. Hence $(u_n)_{n\ge 1}$ is a
non-decreasing sequence of integers.  That sequence may overlap 
from time to time. So let $(v_k)_{k\ge 1}$ be a strictly increasing 
sequence of integers with same range as $(u_n)_{n\ge 1}$.

Define a sequence $(b_n)_{n\ge 1}$ as follows. Let $n\ge 1$ be 
such that there exists $k\ge 1$ such that $n=v_k$. Then, set 
 $b_n:=\sum_{\ell \, :\, u_\ell =v_k} a_\ell$. If there is no 
 $k\ge 1$ such that $n=v_k$, set $b_n:=0$.

\smallskip

We first control  
$$
\sup_{N\ge 1}\Big|\sum_{n =1}^N  b_n {\rm e}^{it\log_2 n}\Big|\, ,$$ 
where $\log_2$ stands for the logarithm in base 2.  


\smallskip

By Theorem \ref{prop-dirichlet}, we have 
 $$
\Big\| \sup_{N\ge 1}|\sum_{n =1}^N  b_n {\rm e}^{i\log_2 n
\cdot }|\Big\|_{\S^2}^2\le C \sum_{n\ge 0}  \Big(\sum_{k=2^n}^{2^{n+1}-1} |b_k|\Big)^2 =\sum_{n\ge 0} \Big(\sum_{\ell\, :\, 2^n \le u_\ell \le 2^{n+1}-1} 
|b_\ell|\Big)^2 \, . 
$$
Now, if $2^n \le u_\ell \le 2^{n+1}-1$, then $n\le\lambda_{\ell} 
\le n+1$ and our first step is proved. 
\smallskip

Let $q\ge p$ be integers. There exist integers $q'\ge p'$ such 
that $v_{p'}=u_p$ and $v_{q'}=u_q$. We have 
\begin{gather*}
\Big|\sum_{k=p}^q a_k{\rm e}^{it\lambda_k}- \sum_{k=v_{p'}}^{v_{q'}} b_k
{\rm e}^{it \log_2 u_k} \Big|\\
\le \sum_{k\, :\, u_k=u_p}|a_k| 
+ \sum_{k\, :\, u_k=u_q}|a_k|  + 
\sum_{\ell=p'}^{q'} \sum_{k\, :\,  u_k= v_\ell} 
|a_k|\, |{\rm e}^{it \lambda_k}-{\rm e}^{it \log_2 u_k}| 
\end{gather*}
Clearly, it suffices to control

$$
\sum_{n\ge 0} \quad \sum_{\ell\, :\, 2^n \le v_\ell \le 2^{n+1}-1} 
\quad \sum_{k\, :\, u_k=v_\ell} |a_k|\, |{\rm e}^{it \lambda_k}-{\rm e}^{it \log_2 u_k}| \, .
$$
Now, for $2^n \le v_\ell \le 2^{n+1}-1$ and $u_k=v_\ell$, 
using that $u_k\le 2^{\lambda_k}\le u_k +1$, we see that 
$|\log_2 (2^{\lambda_k})-\log_2u_k|\le \frac{C}{u_k}$ and that 

\begin{gather*}
|{\rm e}^{it \lambda_k}-{\rm e}^{it \log_2 u_k}| 
=|{\rm e}^{it \log_2(2^{\lambda_k})}-{\rm e}^{it \log_2 u_k}|
\le \frac{C|t|}{u_k}\le \frac{C|t|}{2^n}\, . 
\end{gather*}

Hence, using Cauchy-Schwarz,

\begin{eqnarray*}
\sum_{n\ge 0} \quad \sum_{\ell\, :\, 2^n \le v_\ell \le 2^{n+1}-1} 
\quad \sum_{k\, :\, u_k=v_\ell} |a_k|\, |{\rm e}^{it \lambda_k}-{\rm e}^{it \log u_k}| &\le  &
Ct\sum_{n\ge 0}2^{-n} \sum_{k\, :\, 2^n \le u_k\le 2^{n+1}-1}|a_k|\\
\cr &\le &Ct \Big( \sum_{n\ge 0}\big(\sum_{k\, :\, 2^n \le u_k\le 2^{n+1}-1}
|a_k|\big)^2\Big)^{1/2}\, ,
\end{eqnarray*}
which converges by our assumption. \end{proof}

We shall now derive an almost everywhere  convergence result 
concerning the Fourier series of an almost periodic function in $\S^2$. 
We shall first recall known results about  norm convergence. 

\medskip

Let $(\lambda_n)_{n\ge 1}$ be a (non-necessarily increasing) 
of positive real numbers. As already mentionned (in the case of Dirichlet series), by Wiener \cite{Wi}, see also Tornehave \cite{To} 
if 
\begin{equation}\label{gen-wiener}
\sum_{n\ge 0} \Big(\sum_{k\ge 1\, : n\le \lambda_k<n+1}|a_k|\Big)^2 <\infty
\, ,
\end{equation}
then $\sum_{n\ge 1}a_n{\rm e}^{i\lambda_n t}$ 
is the Fourier series of an element of $f\in \S^2$. 

On the other hand if $f\in \S^2$ admits a sequence of positive real numbers 
$(\lambda_n)_{n\ge 1}$ as frequencies and such that 
$\hat f(\lambda_n)\ge 0$ for every $n\ge 1$, then (Tornehave \cite{To})

 $$
\sum_{n\ge 0} \Big(\sum_{k\ge 1\, : n\le \lambda_k<n+1}|\hat f(\lambda_k)|\Big)^2
\le C \|f\|_{\S^2}^2\, .
$$
Hence, \eqref{gen-wiener} holds. 

\medskip

Condition \eqref{gen-wiener} is thus optimal for deciding whether 
$\sum_{n\ge 1}a_n{\rm e}^{i\lambda_n t}$ is the Fourier series of an element of $\S^2$ or not. One can not however expect that it is always necessary, so we should provide a counterexample in Proposition \ref{counterexample} below. 

\medskip



\medskip

Let $f\in \S^2$ be such that $\Lambda\subset [0,+\infty)$ (that restriction 
may be obviously removed). Assume that $\L$ is $\alpha$-separated for some $\alpha>0$ and write $\L:=\{\lambda_1<\lambda_2\ldots\}$.  Then, 
$$
\frac{\alpha}{C}\sum_{n\ge 0} (\sum_{k\ge 1\, : n\le \lambda_k<n+1}|\hat f(\l_k)|)^2 \le \sum_{n\ge 1}|\hat f(\l_n)|^2\le \|f\|_{\S^2}^2\le C \sum_{n\ge 0} (\sum_{k\ge 1\, : n\le \lambda_k<n+1}|\hat f(\l_k)|)^2\, .
$$

In particular, we have the following 
direct consequence of Theorem \ref{prop-dirichlet}.

\begin{cor}
Let $f\in \S^2$ be such that $\Lambda\subset [0,+\infty)$. Assume that $\L$ is $\alpha$-separated for some $\alpha>0$. 
There exists $C>0$, independent of $f$ and $\alpha$ such that 
$$
\big\|\sup_{N\ge 1}\big|\sum_{n=1}^N \hat f(\l_n){\rm e}^{i\l_n\cdot}
\big|\, \big\|_{\S^2}\le C \frac{\|f\|_{\S^2}}{\alpha}\, .
$$
Moreover, the series $\sum_{n\ge 1} \hat f(\l_n){\rm e}^{i\l_n\cdot}$ converges 
for $\lambda$-almost every $t\in \R$.
\end{cor}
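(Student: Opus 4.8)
The plan is to apply the general maximal inequality of Theorem~\ref{gen-prop} to the coefficient sequence $a_n := \hat f(\l_n)$, and then to bound the right-hand side of \eqref{inemax-gen} by means of the two-sided estimate displayed just above the Corollary, which is precisely what $\a$-separation buys us.

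First I would enumerate $\L = \{\l_1 < \l_2 < \cdots\}$; if $\L$ is finite the statement is trivial (a finite generalized trigonometric polynomial), so assume $\L$ is infinite. Since $\L \subset [0,+\infty)$ is $\a$-separated, the $\l_n$ form a strictly increasing sequence tending to $+\infty$, so the hypotheses of Theorem~\ref{gen-prop} are met as soon as we check the Wiener condition \eqref{Wiener-cond}. (A possible frequency $\l_1 = 0$ only adds the constant $\hat f(0)$ to each partial sum and may be discarded.) To verify \eqref{Wiener-cond}, note that it is stated with the closed blocks $\{k : n \le \l_k \le n+1\}$, whereas the displayed estimate uses the half-open blocks $\{k : n \le \l_k < n+1\}$; writing $A_n := \sum_{k : n \le \l_k < n+1}|\hat f(\l_k)|$ one has $\sum_{k : n \le \l_k \le n+1}|\hat f(\l_k)| \le A_n + A_{n+1}$, whence $\sum_n (A_n + A_{n+1})^2 \le 4 \sum_n A_n^2$, so the two Wiener sums are comparable up to a universal factor. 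By the leftmost inequality in the display preceding the Corollary,
$$
\sum_{n\ge 0} A_n^2 \;\le\; \frac{C}{\a}\,\|f\|_{\S^2}^2 ,
$$
so \eqref{Wiener-cond} holds and Theorem~\ref{gen-prop} applies.

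Feeding this bound into \eqref{inemax-gen} gives
$$
\Big\| \sup_{N\ge 1}\big|\sum_{n=1}^N \hat f(\l_n){\rm e}^{i\l_n\cdot}\big|\,\Big\|_{\S^2}
\;\le\; C\Big(\sum_{n\ge 0} A_n^2\Big)^{1/2}
\;\le\; \frac{C}{\sqrt{\a}}\,\|f\|_{\S^2}.
$$
Since an $\a$-separated set is also $\a'$-separated for any $0 < \a' \le \a$, there is no loss in assuming $\a \le 1$, in which case $1/\sqrt{\a} \le 1/\a$ and the asserted bound $C\|f\|_{\S^2}/\a$ follows. The $\l$-almost everywhere convergence is then immediate: the Wiener condition \eqref{Wiener-cond} having been verified, the convergence assertion of Theorem~\ref{gen-prop} applies directly to $\sum_{n\ge 1} \hat f(\l_n){\rm e}^{i\l_n t}$ (equivalently, it is the routine consequence of the maximal inequality just established).

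There is no serious obstacle here; the substance is carried entirely by the two-sided $\a$-separation estimate together with Theorem~\ref{gen-prop}. The only points needing a word of care are the closed-versus-half-open block matching (which costs a universal constant) and reconciling the exponent $1/\sqrt{\a}$ produced by the argument with the stated $1/\a$, handled by the normalization $\a \le 1$.
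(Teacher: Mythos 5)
Your proof is correct and takes essentially the same route the paper intends: the corollary appears there without a separate proof, as a direct consequence of the two-sided $\alpha$-separation estimate displayed just above it together with the maximal inequality and a.e.\ convergence of Theorem \ref{gen-prop} (the paper's attribution to Theorem \ref{prop-dirichlet} is evidently a slip, since the frequencies here are general). Your added care about the closed-versus-half-open blocks and about reconciling the $1/\sqrt{\alpha}$ bound the argument actually yields with the stated $1/\alpha$ (which is only meaningful after normalizing $\alpha\le 1$, as a single-frequency example shows) correctly fills in details the paper leaves implicit.
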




\medskip

We now give an example of Fourier series converging in $\S^2$ 
while \eqref{gen-wiener} does not hold. 
Let us first recall the following result of Halasz, see Queff\'elec \cite{Q}. 

\medskip

\begin{lem}
There exists $C>0$ such that for every iid Rademacher variables 
$(\varepsilon_n)_{n\ge 1}$  
\begin{equation}\label{HQ}
\E(\sup_{t\in \R} |\sum_{k=1}^n \varepsilon_kk^{it}|)
\le C \frac{n}{\log (n+1) }\, .
\end{equation}
\end{lem}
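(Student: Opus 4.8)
\medskip

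The plan is to transfer the supremum over $\R$ to a supremum over a finite-dimensional torus, where the theory of random trigonometric polynomials is available. Write $P(t)=\sum_{k=1}^n\varepsilon_k\,{\rm e}^{it\log k}$ and use $\log k=\sum_{p}v_p(k)\log p$, where $v_p(k)$ denotes the exponent of the prime $p$ in $k$ and $p$ ranges over the $r:=\pi(n)$ primes $\le n$. Since $\{\log p:p\le n\}$ is linearly independent over $\Q$ (from unique factorisation: $\sum_p a_p\log p=0$ with $a_p\in\Z$ forces $\prod_p p^{a_p}=1$, hence all $a_p=0$), Kronecker's theorem shows that the line $t\mapsto(t\log p\bmod 2\pi)_{p\le n}$ is dense in $\T^r$. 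Writing $v(k)=(v_p(k))_{p\le n}\in\Z_{\ge0}^r$ and $F(\phi):=\sum_{k=1}^n\varepsilon_k\,{\rm e}^{i\langle v(k),\phi\rangle}$, continuity of $F$ and density of the orbit give the exact identity
$$\sup_{t\in\R}|P(t)|=\sup_{\phi\in\T^r}|F(\phi)|.$$
This reduces the lemma to a bound on the expected sup-norm of a random multivariate trigonometric polynomial whose $n$ distinct frequencies are the exponent vectors of the integers $\le n$.

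I would then run the moment method on $\T^r$. For each fixed $\phi$ the coefficients of $F$ are unimodular, so Khintchine's inequality gives $\E|F(\phi)|^{2m}\le(2m-1)!!\,n^{m}$ uniformly in $\phi$, whence $\E\int_{\T^r}|F|^{2m}\,d\phi\le(2m-1)!!\,n^{m}$. To return from this $L^{2m}$-mean to the supremum I would use the tensorised Nikolskii inequality: since $F$ has degree $d_p=\lfloor\log n/\log p\rfloor$ in the variable $\phi_p$,
$$\|F\|_\infty^{2m}\le\mathcal D\int_{\T^r}|F|^{2m}\,d\phi,\qquad\mathcal D:=\prod_{p\le n}(2d_p+1).$$
Taking expectations, applying Jensen in the form $\E\|F\|_\infty\le(\E\|F\|_\infty^{2m})^{1/2m}$, and optimising the bound $\mathcal D^{1/2m}\,((2m-1)!!)^{1/2m}\sqrt n$ over $m$ (the optimum being $m\asymp\log\mathcal D$) produces the estimate. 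The arithmetic input here is that $\log\mathcal D\asymp n/\log n$: the $\sim n/\log n$ primes in $(\sqrt n,n]$ each contribute a bounded factor to $\mathcal D$.

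The main obstacle is quantitative, and it is where the genuine content of Halász's theorem lies. Carried out as above, this natural route yields only $\E\sup_t|P(t)|\lesssim n/\sqrt{\log n}$; the dual Dudley/Salem--Zygmund chaining estimate gives the same rate, because the metric entropy of $(\T^r,\rho)$, with $\rho(\phi,\psi)^2=\sum_k|{\rm e}^{i\langle v(k),\phi\rangle}-{\rm e}^{i\langle v(k),\psi\rangle}|^2$, again scales like $n/\log n$. Recovering the extra factor $\sqrt{\log n}$ needed for $n/\log n$ cannot come from the entropy of the frequency set alone: the deterministic choice $\varepsilon_k\equiv1$ has $\|P\|_\infty=n$, so any sharp argument must exploit the cancellation in $F$ more efficiently than Khintchine-plus-Nikolskii. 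The decisive step is therefore to use the multiplicative (Euler-product) structure of the exponent vectors $v(k)$ rather than treating $\{v(k)\}$ as an unstructured spectrum; the reduction to the torus and the moment computation above serve only to isolate this point.
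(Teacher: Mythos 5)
Your proposal has a genuine gap, and you say so yourself: as carried out, it proves $\E\sup_{t\in\R}\big|\sum_{k=1}^n\varepsilon_k k^{it}\big|\lesssim n/\sqrt{\log n}$, not the claimed $Cn/\log(n+1)$. The Bohr-lift reduction is correct and is indeed the standard first move (by unique factorisation the numbers $\log p$, $p\le n$, are rationally independent, so Kronecker gives $\sup_{t\in\R}|P(t)|=\sup_{\phi\in\T^{\pi(n)}}|F(\phi)|$), and your arithmetic $\log\mathcal D\asymp\pi(n)\asymp n/\log n$ is right. But the missing factor $\sqrt{\log n}$ is not a refinement one can wave at: it is the entire content of Halász's estimate. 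Indeed, by the Rademacher contraction principle, restricting the spectrum to the prime frequencies $e_p$ gives $\E\sup_\phi|F(\phi)|\gtrsim\E\sup_\phi\big|\sum_{p\le n}\varepsilon_p{\rm e}^{i\phi_p}\big|=\pi(n)\asymp n/\log n$, so the lemma asserts the exact order, and every soft method (Khintchine plus Nikolskii, Bernstein-plus-sampling, Dudley entropy) saturates at $n/\sqrt{\log n}$ because the polytorus carries $\sim n/\log n$ effective dimensions --- exactly as you diagnose. Diagnosing where the difficulty lies, however, is not overcoming it: the proposal contains no argument exploiting the multiplicative structure of the exponent vectors, only the (correct) statement that such an argument is needed. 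Note that the paper itself does not prove the lemma either; it quotes Halász's theorem via Queff\'elec \cite{Q}, and your attempt does not reconstruct that cited proof.

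The gap also matters quantitatively for the paper's application. Proposition \ref{counterexample} uses the lemma through Abel summation with $u_n=(n\sqrt{\log(n+1)})^{-1}$, so $u_n-u_{n+1}\asymp n^{-2}(\log n)^{-1/2}$; the bound $n/\log n$ yields $\sum_n\E\sup_{t}|S_n(t)|\,(u_n-u_{n+1})\lesssim\sum_n n^{-1}(\log n)^{-3/2}<\infty$, whereas your $n/\sqrt{\log n}$ gives only the divergent series $\sum_n 1/(n\log n)$, so the weaker estimate is useless there. A further minor slip: $|F|^{2m}=F^m\overline{F}^m$ has degree $md_p$ in the variable $\phi_p$, so the tensorised Nikolskii constant is $\prod_{p\le n}(2md_p+1)$ rather than your $\prod_{p\le n}(2d_p+1)$; with the honest $m$-dependence your Khintchine--Nikolskii optimisation degrades even below $n/\sqrt{\log n}$ (that rate is legitimately recovered by the Bernstein gradient bound plus discretisation on a grid of $\prod_p O(d_p)$ points, i.e.\ the Salem--Zygmund route). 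In short: correct reduction, essentially correct derivation of a strictly weaker bound, and an honest but fatal acknowledgment that the decisive step of Halász's argument is absent.
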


\medskip

\begin{prop}\label{counterexample}
Let $(\varepsilon_n)_{n\ge 1}$ be iid Rademacher variables on 
$(\Omega,\F,\P)$. For $\P$-almost all $\omega\in \Omega$, 
$\sum_{n\ge 1} \frac{\varepsilon_n(\omega)n^{it}}{n \sqrt{\log (n+1)}}$
converges in $\S^2$, while \eqref{Wiener-cond} is not satisfied 
(with $a_n=\frac{\varepsilon_n(\omega)}{n \sqrt{\log (n+1)}}$).
\end{prop}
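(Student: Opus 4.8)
The plan is to establish the two assertions separately, since they concern opposite directions. The failure of the Wiener condition \eqref{Wiener-cond} (equivalently \eqref{gen-wiener} here, with $\lambda_n=\log n$ so that the blocks are dyadic) is the easy, deterministic part, and I would handle it first. With $a_n=\varepsilon_n(\omega)/(n\sqrt{\log(n+1)})$ we have $|a_n|=1/(n\sqrt{\log(n+1)})$, which does not depend on $\omega$, so the Wiener sum is actually deterministic. On the $n$-th dyadic block I would estimate
\begin{equation*}
\sum_{k=2^n}^{2^{n+1}-1}|a_k|=\sum_{k=2^n}^{2^{n+1}-1}\frac{1}{k\sqrt{\log(k+1)}}\asymp \frac{1}{\sqrt{n\log 2}}\cdot\sum_{k=2^n}^{2^{n+1}-1}\frac1k\asymp \frac{1}{\sqrt{n}},
\end{equation*}
using that $\log(k+1)\asymp n$ throughout the block and that the harmonic sum over a dyadic block is $\asymp\log 2$. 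Squaring gives a term $\asymp 1/n$, and $\sum_n 1/n=\infty$, so \eqref{Wiener-cond} fails for every $\omega$. This shows the result genuinely lies outside the scope of Theorem \ref{prop-dirichlet}.

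For the positive assertion — almost sure convergence in $\S^2$ — the key is that in $\S^2$, convergence is governed by the Besicovitch/Bessel inequality \eqref{Bessel}, but we cannot use the Wiener criterion since it fails. Instead I would exploit the Halasz estimate \eqref{HQ} from the Lemma. The strategy is to show that the partial sums form a Cauchy sequence in $\S^2$ for a.e.\ $\omega$, which by \eqref{Bessel} and the definition of the Stepanov norm reduces to controlling dyadic blocks of the tail. For a block $B_n=\{2^n,\dots,2^{n+1}-1\}$, I would write the block contribution and pull out the common order of magnitude of the coefficients: on $B_n$ we have $n\sqrt{\log(n+1)}\asymp 2^n\sqrt{n}\cdot(\text{slowly varying})$, so the block sum $\sum_{k\in B_n}\frac{\varepsilon_k(\omega)}{k\sqrt{\log(k+1)}}k^{it}$ is comparable, up to the smooth weight $1/(k\sqrt{\log(k+1)})$ which is essentially constant $\asymp 2^{-n}/\sqrt{n}$ across the block, to $2^{-n}n^{-1/2}\sum_{k\in B_n}\varepsilon_k(\omega)k^{it}$. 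Taking the Stepanov (hence sup-in-$t$) norm and then expectation, Halasz's inequality \eqref{HQ} applied to the $\asymp 2^n$ terms of the block gives
\begin{equation*}
\E\,\Big\|\sum_{k\in B_n}\frac{\varepsilon_k\,k^{i\cdot}}{k\sqrt{\log(k+1)}}\Big\|_{\S^2}\le \frac{C}{2^n\sqrt{n}}\cdot\frac{2^n}{\log(2^n+1)}\asymp \frac{C}{\sqrt{n}\,n}=\frac{C}{n^{3/2}}.
\end{equation*}
The smooth weight is not exactly constant, so this comparison step requires an Abel summation across the block to transfer the sup-norm bound from the flat sum $\sum_{k\in B_n}\varepsilon_k k^{it}$ to the weighted one; this is the one genuinely delicate point, and I would carry it out by summing by parts and bounding the resulting partial sums $\sup_M|\sum_{k=2^n}^{M}\varepsilon_k k^{it}|$ again by \eqref{HQ} (or its partial-sum form), with the total variation of the weight over $B_n$ being $O(2^{-n}n^{-1/2})$.

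Granting the block estimate, the series $\sum_n \big\|(\text{block }n)\big\|_{\S^2}$ has finite expectation because $\sum_n n^{-3/2}<\infty$, hence is a.s.\ finite by Tonelli/monotone convergence. Therefore, for $\P$-a.e.\ $\omega$, the sequence of dyadic partial sums is absolutely summable in $\S^2$, so it converges in the (complete) space $\S^2$; filling in the non-dyadic partial sums is routine since the coefficients within a single block are summable in the same $n^{-3/2}$-summable sense. This yields $\S^2$-convergence for a.e.\ $\omega$ and completes the proof. The main obstacle is the Abel-summation comparison in the block estimate: making precise that the slowly varying weight $1/(k\sqrt{\log(k+1)})$ can be replaced by its value at the block's endpoint without losing the gain from the $\log(n+1)$ denominator in Halasz's bound, which is exactly what turns a divergent $\sum 1/n$ into a convergent $\sum n^{-3/2}$.
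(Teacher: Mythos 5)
Your proposal is correct and takes essentially the same route as the paper's proof: reduce to the dyadic partial sums via the trivial intra-block bound $\sum_{k=2^n}^{2^{n+1}-1}\frac{1}{k\sqrt{\log(k+1)}}\le \frac{2}{\sqrt n}\to 0$, then transfer Halasz's estimate \eqref{HQ} to the weighted series by Abel summation against the slowly varying weight $u_k=(k\sqrt{\log(k+1)})^{-1}$ and conclude by Tonelli; the paper simply performs the summation by parts globally, with $S_n(t)=\sum_{k\le n}\varepsilon_k k^{it}$ and the termwise bound $\E\sup_t|S_n(t)|\,(u_n-u_{n+1})\le C/(n(\log (n+1))^{3/2})$, rather than block by block as you do. One harmless slip: the intra-block coefficient sums are $\asymp n^{-1/2}$, not summable at rate $n^{-3/2}$, but for filling in the non-dyadic partial sums only their convergence to $0$ is needed, which is exactly how the paper's first step proceeds.
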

\begin{proof} For every $n\ge 1$, every $2^n\le k\le 2^{n+1}$ and every $\omega\in \Omega$, we have 
 \begin{gather*}
\|\sum_{\ell=2^n }^k \frac{\varepsilon_\ell(\omega)\ell^{it}}
{\ell \sqrt{\log (\ell+1)}}\|_{\S^2} \le \sum_{\ell=2^n }^k \frac{1}
{\ell \sqrt{\log (\ell+1)}}
\le \frac{2}{\sqrt n}\underset{n\to +\infty}\longrightarrow 
0\, .
\end{gather*}

Hence, it suffices to prove that for $\P$-almost every 
$\omega\in \Omega$, $(\sum_{n=1}^{2^N}\frac{\varepsilon_n(\omega)n^{it}}{n \sqrt{\log (n+1)}})_{N\ge 1}$ converges in $\S^2$.

\medskip

Let $S_n(t):= \sum_{k=1}^n \varepsilon_kk^{it}$ ($S_0(t)=0$) and $u_n:=
(n\sqrt{\log(n+1)})^{-1}$. We have 

\begin{gather*}
\sum_{n=1}^{2^N}\frac{\varepsilon_n(\omega)n^{it}}{n \sqrt{\log (n+1)}} 
=\sum_{n=1}^{2^N}(S_n(t)-S_{n-1}(t))u_n= 
\sum_{n=1}^{2^N}S_n(t)(u_n-u_{n+1})+S_{2^N}(t)u_{2^N+1}\, .
\end{gather*} 

It follows from \eqref{HQ} that 

\begin{gather*}
\E\Big(\sum_{n\ge 1} \sup_{t\in \R}\big|S_n(t)(u_n-u_{n+1})\big|
\Big) <\infty\, ,\\
\E\Big(\sum_{n\ge 1} \sup_{t\in \R}\big|S_{2^N}(t)u_{2^N+1}
\big|\Big)<\infty\, ,
\end{gather*}
and the result follows. \end{proof}

\section{Convergence almost everywhere of associated series of dilates.}\label{s3}

\begin{theorem}\label{prop-series-1}
Let $(\lambda_n)_{n\ge 1}$ and $(\mu_n)_{n\ge 1}$ be  non-decreasing sequences of  real numbers 
greater than 1. Let $(\alpha_n)_{n\ge 1}$ be a sequence of complex numbers 
such that 
$$\sum_{n\ge 1} \Big(\sum_{k\,:\, n\le \lambda_k< n+1}|\alpha_k|\Big)^2
<\infty.$$ Let $(\beta_n)_{n\ge 1}\in \ell^1$. Then, $D(t):= \sum_{n\ge 1} \beta_n {\rm e}^{i\mu_nt}$ defines 
a continuous function on $\R$ (and in $S^2$) and there exists a universal constant $C>0$ such that 
\begin{equation}\label{inemax-series-1}
\Big\|\sup_{N\ge 1} \big|\sum_{n=1}^N \alpha_n D(\lambda_n t)\big|\, \Big\|_{\S^2}
\le C \Big(\sum_{n\ge 1}|\beta_n|\Big) \Big(\sum_{n\ge 1} \big(\sum_{k\,:\, n\le \lambda_k< n+1}|\alpha_k|\big)^2\Big)^{1/2 }\, .
\end{equation}
Moreover, the series $\sum_{n\ge 1} 
\alpha_n D(\lambda_nt)$ converges in $\S^2$ and for $\lambda$-a.e. $t\in \R$.
\end{theorem}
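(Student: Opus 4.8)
The plan is to reduce the whole statement to the scalar almost periodic series already controlled by Theorem \ref{gen-prop}, exploiting the near dilation-invariance of the Stepanov norm. First I would check that $D$ is well defined: since $(\beta_n)\in\ell^1$ the series $\sum_n\beta_n{\rm e}^{i\mu_n t}$ converges uniformly, so $D$ is continuous and bounded by $\sum_n|\beta_n|$, and being a uniform limit of trigonometric polynomials it lies in $\S^2$ with $\|D\|_{\S^2}\le\sum_n|\beta_n|$. Because the $m$-series converges absolutely and uniformly while the $n$-sum is finite, one may expand
$$\sum_{n=1}^N\alpha_n D(\lambda_n t)=\sum_{m\ge1}\beta_m\sum_{n=1}^N\alpha_n {\rm e}^{i\mu_m\lambda_n t}.$$
Pulling the supremum inside and using subadditivity of the $\S^2$ norm,
$$\Big\|\sup_{N}\big|\sum_{n=1}^N\alpha_n D(\lambda_n t)\big|\Big\|_{\S^2}\le\sum_{m\ge1}|\beta_m|\,\Big\|\sup_N\big|\sum_{n=1}^N\alpha_n {\rm e}^{i\mu_m\lambda_n t}\big|\Big\|_{\S^2}.$$

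The crucial point is to bound each summand with a constant independent of $m$. Here I would use the \emph{dilation estimate}: for every $H\in L^2_{\rm loc}(\R)$ and every $\mu\ge1$, the substitution $u=\mu t$ gives
$$\|H(\mu\cdot)\|_{\S^2}^2=\sup_x\frac1\mu\int_{\mu x}^{\mu x+\mu}|H|^2\le\frac{\lceil\mu\rceil}{\mu}\|H\|_{\S^2}^2\le2\|H\|_{\S^2}^2,$$
the middle inequality coming from covering an interval of length $\mu$ by $\lceil\mu\rceil$ unit intervals. Since $t\mapsto\sup_N|\sum_{n=1}^N\alpha_n{\rm e}^{i\mu_m\lambda_n t}|$ is precisely $G(\mu_m t)$ with $G(s):=\sup_N|\sum_{n=1}^N\alpha_n{\rm e}^{i\lambda_n s}|$, applying the estimate with $\mu=\mu_m>1$ and then invoking Theorem \ref{gen-prop} for the sequence $(\lambda_n)$ (reducing to a strictly increasing sequence as in its proof if needed) yields, uniformly in $m$,
$$\Big\|\sup_N\big|\sum_{n=1}^N\alpha_n {\rm e}^{i\mu_m\lambda_n t}\big|\Big\|_{\S^2}\le\sqrt2\,\|G\|_{\S^2}\le\sqrt2\,C\Big(\sum_{p}\big(\sum_{n:\,p\le\lambda_n<p+1}|\alpha_n|\big)^2\Big)^{1/2}.$$
Summing against $\sum_m|\beta_m|$ gives exactly \eqref{inemax-series-1}.

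Both convergence assertions I would deduce from the maximal inequality applied to tails. For $M<N$, replacing $(\alpha_n)$ by $(\alpha_n\mathbf 1_{\{n>M\}})$ in \eqref{inemax-series-1} controls $\big\|\sup_{K>M}|\sum_{n=M+1}^K\alpha_n D(\lambda_n t)|\big\|_{\S^2}$ by $\sqrt2\,C\big(\sum_m|\beta_m|\big)\big(\sum_p(\sum_{n>M:\,p\le\lambda_n<p+1}|\alpha_n|)^2\big)^{1/2}$, a tail of the convergent Wiener sum, hence tending to $0$; this forces the partial sums to be Cauchy in $\S^2$. For $\lambda$-a.e. convergence I would run the standard maximal argument on the dense class of finitely supported coefficient sequences, where the series is a finite sum and converges everywhere: the oscillation of the general series equals that of its tail, is bounded pointwise by twice the tail maximal function, and the latter has $\S^2$-norm tending to $0$ by the same estimate, so the oscillation vanishes almost everywhere.

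The only genuine obstacle is obtaining the per-$m$ inequality with a constant free of $\mu_m$. A direct application of Theorem \ref{gen-prop} to the dilated frequencies $\mu_m\lambda_n$ would require regrouping them into unit intervals; because these frequencies are spread apart by the factor $\mu_m$, the resulting Wiener constant degrades like $\sqrt{\mu_m}$ and would demand $\sum_m|\beta_m|\sqrt{\mu_m}<\infty$ instead of merely $(\beta_m)\in\ell^1$. The dilation estimate for the Stepanov norm is exactly what removes this dependence and keeps the constant universal, which is why I would route the argument through $G$ rather than through the frequencies $\mu_m\lambda_n$ directly.
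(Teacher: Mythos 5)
Your proposal is correct and follows essentially the same route as the paper: the authors likewise expand $D$ to reduce to $\sum_{k}|\beta_k|\,\sup_N|\sum_n\alpha_n{\rm e}^{it\lambda_n\mu_k}|$, and their change of variables $\int_x^{x+1}\sup_N|\cdot|^2\,dt=\frac1{\mu_k}\int_{\mu_k x}^{\mu_k(x+1)}\sup_N|\cdot|^2\,dt\le\frac{[\mu_k]+1}{\mu_k}\|\sup_N|\cdot|\,\|_{\S^2}^2$ is exactly your dilation estimate with the same factor $\le 2$, followed by the same invocation of Theorem \ref{gen-prop}. Your only addition is to spell out the tail/oscillation details behind what the paper dismisses as ``standard arguments'' for the $\S^2$ and $\lambda$-a.e.\ convergence, which is fine and consistent with their intent.
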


\begin{proof} Let $x\in \R$. The fact that $D$ is a continuous function in $S^2$ follows easily from the fact that $(\beta_n)_{n\ge 1}\in 
\ell^1$. We also have, for every $N\ge 1$,
$$
\Big|\sum_{n=1}^N \alpha_n D(\lambda_n t)\Big|\le \sum_{k\ge 1}|\beta_k| \Big|\sum_{n=1}^N \alpha_n{\rm e}^{it \lambda_n\mu_k}\Big|\, .
$$
By Theorem \ref{gen-prop}, we have 
\begin{gather*}
\int_x^{x+1} \sup_{N\ge 1} \Big|\sum_{n=1}^N \alpha_n{\rm e}^{it \lambda_n\mu_k}\Big|^2dt = \frac1{\mu_k}\int_{\mu_k x}^{\mu_k(x+1)} \sup_{N\ge 1}
\Big|\sum_{n=1}^N
\alpha_n{\rm e}^{it \lambda_n}\Big|^2dt\\
\le \frac{[\mu_k]+1}{\mu_k}\Big\| \sup_{N\ge 1} \big|\sum_{n=1}^N \alpha_n{\rm e}^{it \lambda_n}\big|\, \Big\|_{\S^2}^2\, ,
\end{gather*}
and \eqref{inemax-series-1} follows.

\smallskip

The convergence almost everywhere and in $\S^2$ follows by standard arguments. 
\end{proof}

We also have the following obvious corollary of Theorem 
\ref{gen-prop}, whose proof is left to the reader.

\begin{prop}\label{prop-series-2}
Let $(\lambda_n)_{n\ge 1}$ and $(\mu_n)_{n\ge 1}$ be  non-decreasing sequences of  real numbers 
greater than 1. Let $(\beta_n)_{n\ge 1}$ be a sequence of complex numbers 
such that 
$$\sum_{n\ge 1} \Big(\sum_{k\,:\, n\le \mu_k< n+1}|\beta_k|\Big)^2
<\infty . $$
Let $(\alpha_n)_{n\ge 1}\in \ell^1$. Then, $D(t):= \sum_{n\ge 1} \beta_n {\rm e}^{i\mu_nt}$ converges in $\S^2$ and  
 there exists a universal constant $C>0$ such that 
\begin{equation}\label{inemax-series-2}
\Big\|\sup_{N\ge 1} \big|\sum_{n=1}^N \alpha_n D(\lambda_n t)\big|\, \Big\|_{\S^2}
\le C \Big(\sum_{n\ge 1}|\alpha_n|\Big) \Big(\sum_{n\ge 1} \big(\sum_{k\,:\, n\le
 \mu_k< n+1}|\beta_k|\big)^2\Big)^{1/2 }\, .
\end{equation}
Moreover, the series $\sum_{n\ge 1} 
\alpha_n D(\lambda_nt)$ converges in $\S^2$ and for $\lambda$-a.e. $t\in \R$.
\end{prop}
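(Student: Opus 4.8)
The plan is to observe that, in contrast with Theorem~\ref{prop-series-1}, here the $\ell^1$ sequence $(\alpha_n)$ sits \emph{outside} the dilation, so the supremum over partial sums is harmless and the whole statement collapses to an absolute convergence estimate. Indeed, for every $N\ge 1$ and every $t\in\R$,
\[
\Big|\sum_{n=1}^N \alpha_n D(\lambda_n t)\Big|\le \sum_{n=1}^N |\alpha_n|\,|D(\lambda_n t)|\le \sum_{n\ge 1}|\alpha_n|\,|D(\lambda_n t)|,
\]
and since the right-hand side does not depend on $N$, taking the supremum over $N$ preserves the bound. By the triangle inequality for $\|\cdot\|_{\S^2}$ it then suffices to estimate $\sum_{n\ge 1}|\alpha_n|\,\|D(\lambda_n\cdot)\|_{\S^2}$.

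First I would bound each dilate uniformly in $n$. Writing $\|D(\lambda_n\cdot)\|_{\S^2}^2=\sup_{x\in\R}\frac1{\lambda_n}\int_{\lambda_n x}^{\lambda_n(x+1)}|D(s)|^2\,\dd s$ after the substitution $s=\lambda_n t$, and covering the interval $[\lambda_n x,\lambda_n(x+1)]$ of length $\lambda_n$ by at most $[\lambda_n]+1$ unit intervals, exactly as in the proof of Theorem~\ref{prop-series-1}, one gets
\[
\|D(\lambda_n\cdot)\|_{\S^2}^2\le \frac{[\lambda_n]+1}{\lambda_n}\,\|D\|_{\S^2}^2\le 2\,\|D\|_{\S^2}^2,
\]
the last step using $\lambda_n>1$; hence $\|D(\lambda_n\cdot)\|_{\S^2}\le\sqrt2\,\|D\|_{\S^2}$ for all $n$. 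Next I would control $\|D\|_{\S^2}$ by the hypothesis on $(\beta_n)$: applying Theorem~\ref{gen-prop} to the series $\sum_n\beta_n{\rm e}^{i\mu_n t}$ yields both that $D\in\S^2$ and the maximal inequality \eqref{inemax-gen}; since $D$ is the $\lambda$-a.e.\ limit of its partial sums, we have $|D(t)|\le\sup_{N\ge1}|\sum_{n=1}^N\beta_n{\rm e}^{i\mu_n t}|$ for a.e.\ $t$, whence
\[
\|D\|_{\S^2}\le\Big\|\sup_{N\ge1}\big|\sum_{n=1}^N\beta_n{\rm e}^{i\mu_n\cdot}\big|\Big\|_{\S^2}\le C\Big(\sum_{n\ge1}\big(\sum_{k:\,n\le\mu_k<n+1}|\beta_k|\big)^2\Big)^{1/2}.
\]

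Combining the two estimates gives
\[
\Big\|\sup_{N\ge1}\big|\sum_{n=1}^N\alpha_n D(\lambda_n\cdot)\big|\Big\|_{\S^2}\le\sqrt2\,C\Big(\sum_{n\ge1}|\alpha_n|\Big)\Big(\sum_{n\ge1}\big(\sum_{k:\,n\le\mu_k<n+1}|\beta_k|\big)^2\Big)^{1/2},
\]
which is \eqref{inemax-series-2} with a universal constant. For the convergence assertions, the same estimates show that $\sum_{n\ge1}|\alpha_n|\,|D(\lambda_n t)|$ has finite Stepanov norm, hence is finite for $\lambda$-a.e.\ $t$; thus $\sum_n\alpha_n D(\lambda_n t)$ converges absolutely for a.e.\ $t$, while the tail bound $\sum_{n>M}|\alpha_n|\,\|D(\lambda_n\cdot)\|_{\S^2}\le\sqrt2\,\|D\|_{\S^2}\sum_{n>M}|\alpha_n|\to0$ gives convergence in $\S^2$. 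No real obstacle arises: because the $\ell^1$ weights are external to the dilation, the maximal function collapses to an absolutely convergent series. The only points demanding care are the covering estimate for the dilated interval (identical to Theorem~\ref{prop-series-1}) and reconciling the minor discrepancy between the index condition $n\le\mu_k\le n+1$ of Theorem~\ref{gen-prop} and $n\le\mu_k<n+1$ here, which affects only boundary terms and is absorbed into $C$.
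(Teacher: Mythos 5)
Your proof is correct and is essentially the paper's intended argument: the paper states Proposition \ref{prop-series-2} as an ``obvious corollary of Theorem \ref{gen-prop}, whose proof is left to the reader'', and your route --- collapsing the supremum because $(\alpha_n)_{n\ge 1}\in\ell^1$ sits outside the dilation, the covering estimate $\|D(\lambda_n\cdot)\|_{\S^2}^2\le \frac{[\lambda_n]+1}{\lambda_n}\|D\|_{\S^2}^2\le 2\|D\|_{\S^2}^2$ taken verbatim from the proof of Theorem \ref{prop-series-1}, and Theorem \ref{gen-prop} (equivalently Wiener's condition \eqref{gen-wiener}) to control $\|D\|_{\S^2}$ --- is exactly that corollary written out. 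Your handling of the side issues is also sound: the discrepancy between $n\le\mu_k\le n+1$ and $n\le\mu_k<n+1$ costs only a factor $4$ (each closed block is dominated by two consecutive half-open blocks), and the paper itself already applies Theorem \ref{gen-prop} to merely non-decreasing sequences in the proof of Theorem \ref{prop-series-1}.
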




\begin{theorem} \label{interpolation}
Let $(\lambda_n)_{n\ge 1}$ and $(\mu_n)_{n\ge 1}$ be  non-decreasing sequences of  real numbers 
greater than 1. Let $1\le p,q\le 2$ be such that $1/p+1/q=3/2$. 
There exists $C>0$ such that for any sequence $(\alpha_n)_{n\ge 1}$ and 
$(\beta_n)_{n\ge 1}$ of complex numbers such that 
\begin{equation}\label{cond-inemax-series-3}  \sum_{n\ge 1} \Big(\sum_{k\,:\, n\le \lambda_k< n+1}|\alpha_k|\,\Big)^p
<\infty \qq  \text{and} 
\qq \sum_{n\ge 1} \Big(\sum_{k\,:\, n\le \mu_k< n+1}\big|\beta_k|\,\Big)^q
<\infty \,  ,
\end{equation}
  we have 
\begin{equation}\label{inemax-series-3}
\Big\|\sup_{N\ge 1} \big|\sum_{n=1}^N \alpha_n D(\lambda_n t)\big|\, \Big\|_{\S^2}
\le C \Big(\sum_{n\ge 1} \big(\sum_{k\,:\, n\le \lambda_k< n+1}|\alpha_k|\big)^p \Big)^{1/p}\Big(\sum_{n\ge 1} \big(\sum_{k\,:\, n\le
 \mu_k< n+1}|\beta_k|\big)^q\Big)^{1/q } 
\end{equation}
where $D(t):= \sum_{n\ge 1}\beta_n {\rm e}^{i\mu_n t}$ is defined in $\S^2$. 
 Moreover, the series $\sum_{n\ge 1} 
\alpha_n D(\lambda_nt)$ converges in $\S^2$ and for $\lambda$-a.e. $t\in \R$.
\end{theorem}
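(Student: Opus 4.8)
The plan is to obtain the interpolation theorem by viewing the left-hand side of \eqref{inemax-series-3} as a \emph{bilinear} operator in the two sequences $(\alpha_n)$ and $(\beta_n)$ and then invoking the Riesz--Thorin interpolation theorem (in its bilinear/multilinear form). Concretely, fix the non-decreasing sequences $(\lambda_n)$ and $(\mu_n)$ and, for each pair of scalar sequences, set
\begin{equation*}
B(\alpha,\beta):=\Big\|\sup_{N\ge 1}\big|\sum_{n=1}^N \alpha_n D(\lambda_n t)\big|\,\Big\|_{\S^2},
\qquad D(t)=\sum_{n\ge 1}\beta_n{\rm e}^{i\mu_n t}.
\end{equation*}
The two already-established results, Theorem~\ref{prop-series-1} and Proposition~\ref{prop-series-2}, are precisely the two endpoint bounds for $B$ in the exponent pairs $(p,q)=(2,1)$ and $(p,q)=(1,2)$. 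These are exactly the endpoints of the line segment $1/p+1/q=3/2$ with $1\le p,q\le 2$, so the claimed estimate \eqref{inemax-series-3} is the interpolated inequality at the intermediate exponents.

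The natural function-analytic framework is to replace the raw sequences by their \emph{block sums}. For the $\alpha$-side define $A_n:=\sum_{k:\,n\le\lambda_k<n+1}|\alpha_k|$ and for the $\beta$-side $B_n:=\sum_{k:\,n\le\mu_k<n+1}|\beta_k|$; then the right-hand sides of \eqref{inemax-series-1}, \eqref{inemax-series-2} and \eqref{inemax-series-3} are simply $\|A\|_{\ell^2}\|B\|_{\ell^1}$, $\|A\|_{\ell^1}\|B\|_{\ell^2}$ and $\|A\|_{\ell^p}\|B\|_{\ell^q}$. The two endpoint theorems say that $B$ is bounded as a bilinear map $\ell^2\times\ell^1\to\R$ and $\ell^1\times\ell^2\to\R$ with universal constants, and the target space is fixed ($\S^2$, or rather its scalar norm). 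First I would verify that $B$ extends to a genuine bilinear operator into $L^2$ of a suitable measure space (so that the values, not just the norms, interpolate): here one fixes a translate $x$, works on $L^2([x,x+1],dt)$, and takes the supremum over $N$ as a further $\ell^\infty$-valued coordinate, realizing $\sup_N|\cdot|$ as a norm in a Banach-space-valued $L^2$. Then bilinear Riesz--Thorin applied to the analytic family $(\theta\mapsto\text{exponent }1/p(\theta),1/q(\theta))$ with $\theta\in[0,1]$ interpolating between $(2,1)$ and $(1,2)$ yields the bound at every point of the segment $1/p+1/q=3/2$, with the constant controlled by the geometric mean of the two endpoint constants.

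The main obstacle is \emph{linearizing the maximal function} so that bilinear interpolation applies verbatim: the quantity $\sup_{N}|\sum_{n\le N}\alpha_n D(\lambda_n t)|$ is sublinear rather than linear in $\alpha$, and Riesz--Thorin is stated for linear (or multilinear) operators into $L^p$ spaces. The standard remedy is to freeze the maximizing index by introducing a measurable selection $N(t)$ (or, more cleanly, to pass to an $\ell^\infty$-valued operator $\alpha\mapsto(\sum_{n\le N}\alpha_n D(\lambda_n t))_{N\ge1}$ and interpolate into the vector-valued space $L^2(\S^2$-measure$;\ell^\infty)$, using that the endpoint estimates hold with the supremum inside). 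One must check that both endpoint inequalities do bound this vector-valued operator — which they do, since in both Theorem~\ref{prop-series-1} and Proposition~\ref{prop-series-2} the supremum over $N$ sits \emph{inside} the $\S^2$-norm. A secondary technical point is that $D$ depends on $\beta$, so the dependence on the second variable enters through $D$ and must be seen to be linear in $\beta$ (it is, since $D=\sum_k\beta_k{\rm e}^{i\mu_k\cdot}$); and one should record at the outset, via Hölder on the block sums, that the hypotheses \eqref{cond-inemax-series-3} guarantee $A\in\ell^p$, $B\in\ell^q$ with $p,q\le2$, so that $D\in\S^2$ is well defined and the left-hand side is finite. Once the maximal operator is linearized and both endpoints are read off from the earlier results, the interpolation is automatic, and the $\lambda$-a.e.\ convergence follows by the now-standard argument from the maximal inequality \eqref{inemax-series-3} exactly as in the proof of Theorem~\ref{prop-series-1}.
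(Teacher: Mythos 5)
Your proposal is correct and follows essentially the same route as the paper: the authors also linearize the supremum via a measurable stopping index $J(t)$ (your measurable selection $N(t)$, truncated at a fixed level $N$) and then interpolate the resulting bilinear operator between the endpoints Theorem \ref{prop-series-1} and Proposition \ref{prop-series-2}, using paragraph 10.1 of Calder\'on's complex method \cite{Calderon} in place of your bilinear Riesz--Thorin. The one point you gloss over is that the relevant norms are block-amalgam norms (mixed $\ell^p(\ell^1)$ norms in the raw sequences $(\alpha_n)$, $(\beta_n)$, since the operator is bilinear in these and not in the block sums $A$, $B$), so plain Riesz--Thorin does not literally apply; the paper handles this by computing the complex interpolation norms $\|\cdot\|_{[X_1,X_2]_s}$ of the block spaces explicitly via the standard ${\rm e}^{\varepsilon(z^2-s^2)}$ analytic-family trick, which is exactly the mixed-norm version of the interpolation you implicitly invoke.
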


Before doing the proof let us mention the following immediate corollaries. We first  apply Theorem \ref{interpolation} with the choice  $\m_n =\log n$, $n\ge 1$ and $\l_k=k$, $k\ge 1$.

\begin{cor} \label{inemax-cor1}   Assume that  $$   \sum_{k\ge 1} |\alpha_k|^p
<\infty \qq  \text{and} 
\qq \sum_{n\ge 1} \Big(\sum_{k\,:\, 2^n\le k< 2^{n+1}}\big|\beta_k|\,\Big)^q
<\infty \,  ,
$$
for some $1\le p,q\le 2$  such that $1/p+1/q=3/2$. Let  $D(t):= \sum_{n\ge 1}\beta_n n^{it}$. Then the series $\sum_{k\ge 1} 
\alpha_k D(kt)$ converges in $\S^2$ and for $\lambda$-a.e. $t\in \R$.
\end{cor}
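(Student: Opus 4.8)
The plan is to read the desired estimate \eqref{inemax-series-3} as a bilinear interpolation between the two special cases already proved, namely Theorem~\ref{prop-series-1} and Proposition~\ref{prop-series-2}. Since the blocks $\{k:n\le\mu_k<n+1\}$ partition $\N$, the factor $\sum_{n\ge1}|\beta_n|$ in Theorem~\ref{prop-series-1} equals $\big(\sum_{n}(\sum_{k:n\le\mu_k<n+1}|\beta_k|)^q\big)^{1/q}$ for $q=1$, so Theorem~\ref{prop-series-1} is precisely \eqref{inemax-series-3} at the endpoint $(p,q)=(2,1)$; likewise Proposition~\ref{prop-series-2} is \eqref{inemax-series-3} at the endpoint $(p,q)=(1,2)$. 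Setting $1/p_\theta=(1-\theta)/2+\theta$ and $1/q_\theta=(1-\theta)+\theta/2$, one checks that $1/p_\theta+1/q_\theta=3/2$ for every $\theta\in[0,1]$ and that $(p_\theta,q_\theta)$ sweeps out exactly the range $1\le p,q\le2$, $1/p+1/q=3/2$ of the statement. (That $D$ is well defined in $\S^2$ is automatic here, since the $q$-th power condition with $q\le2$ implies the $2$-nd power condition \eqref{gen-wiener} via $\ell^q\hookrightarrow\ell^2$.)

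Next I would introduce the block mixed-norm sequence spaces $X_p$ with $\|\alpha\|_{X_p}=\big(\sum_{n\ge1}(\sum_{k:n\le\lambda_k<n+1}|\alpha_k|)^p\big)^{1/p}$ and, analogously, $Y_q$ defined with $(\mu_k)$. Each is an $\ell^p$-sum of the $\ell^1$-norms of the restrictions of the sequence to the blocks, i.e. a weighted $\ell^p(\ell^1)$ space, and for such spaces Calder\'on's complex interpolation identity gives $[X_2,X_1]_\theta=X_{p_\theta}$ and $[Y_1,Y_2]_\theta=Y_{q_\theta}$. The target space $\S^2$ is the same at both endpoints, so only the domain must be interpolated.

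The supremum over $N$ prevents $(\alpha,\beta)\mapsto\sup_{N\ge1}|\sum_{n=1}^N\alpha_n D(\lambda_n t)|$ from being bilinear, so I would first linearize it. For a measurable $N:\R\to\{1,\dots,M\}$ set $T_N(\alpha,\beta)(t)=\sum_{n=1}^{N(t)}\alpha_n D(\lambda_n t)$, which is genuinely bilinear in $(\alpha,\beta)$ for $N$ fixed. Since $|T_N(\alpha,\beta)(t)|\le\sup_{N\ge1}|\sum_{n=1}^N\alpha_n D(\lambda_n t)|$, the two endpoint inequalities give $\|T_N\|_{X_2\times Y_1\to\S^2}\le C$ and $\|T_N\|_{X_1\times Y_2\to\S^2}\le C$, uniformly in $N$ and $M$. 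The bilinear complex interpolation theorem applied to $T_N$ then yields $\|T_N(\alpha,\beta)\|_{\S^2}\le C\|\alpha\|_{X_{p_\theta}}\|\beta\|_{Y_{q_\theta}}$ with $C$ independent of $N$ and $M$. Optimizing $N(t)$ pointwise and letting $M\to\infty$, one recovers $\sup_{N(\cdot)}\|T_N(\alpha,\beta)\|_{\S^2}=\|\sup_{N\ge1}|\sum_{n=1}^N\alpha_n D(\lambda_n t)|\|_{\S^2}$, because the supremum over $x$ defining the Stepanov norm commutes with the pointwise optimization of $N(t)$; this is exactly \eqref{inemax-series-3}. The almost everywhere and $\S^2$ convergence then follow from this maximal inequality together with the trivial convergence on the dense set of finitely supported sequences, by the usual Banach-principle argument already invoked in the preceding proofs.

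The main obstacle is the rigorous handling of the linearization in tandem with the interpolation: one must verify that the supremum over measurable choices $N(\cdot)$ reproduces the maximal function in the $\S^2$ norm, and that for each fixed $N$ the single bilinear operator $T_N$ falls under the hypotheses of complex bilinear interpolation with the mixed-norm identities $[X_2,X_1]_\theta=X_{p_\theta}$ and $[Y_1,Y_2]_\theta=Y_{q_\theta}$. Once these bookkeeping points are secured, the estimate is immediate from the two endpoints, and no new harmonic-analytic input beyond Theorem~\ref{gen-prop} is needed.
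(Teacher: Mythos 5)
Your proposal is correct and follows essentially the same route as the paper: the corollary is there deduced by specializing Theorem~\ref{interpolation} (with $\lambda_k=k$ and $\mu_n=\log_2 n$, giving the dyadic blocks), and the paper's proof of that theorem is exactly your argument — linearize the maximal operator via a measurable stopping index, note that Theorem~\ref{prop-series-1} and Proposition~\ref{prop-series-2} are the endpoints $(p,q)=(2,1)$ and $(1,2)$ on the line $1/p+1/q=3/2$, and apply Calder\'on's bilinear complex interpolation on the block spaces $X_i$, $Y_i$. The only cosmetic difference is that where you invoke the mixed-norm identity $[X_2,X_1]_\theta=X_{p_\theta}$ as a known fact, the paper proves the needed embedding by hand, constructing the explicit analytic families $f_\varepsilon$ to bound the $[X_1,X_2]_s$-norm.
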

 \begin{example} Let $1/2<\a\le 1$. Choose $1/\a<p\le 2$ and  $q=2p/(3p-2)$ ($1\le q<2$). Let  $D(t)= \sum_{n\ge 1}\beta_n n^{it}$ and assume that  
\begin{equation}\label{excond}
   \sum_{n\ge 1} \Big(\sum_{k\,:\, 2^n\le k< 2^{n+1}}\big|\beta_k|\,\Big)^q
<\infty \,  .
\end{equation}

 Then the series
\begin{equation}\label{exconv}
\sum_{k\ge 1} \frac{D(kt)}{k^\a}\end{equation}
converges almost everywhere. 
  This extends  to Dirichlet series Hartman and Wintner result \cite{HW} showing that the series 
$\Phi_\a(x) =\sum_{k=1}^\infty \frac{\psi(kx)}{k^\a}$
converges almost everywhere. Here $\psi(x)= x-[x] -1/2= \sum_{j=1}^\infty \frac{\sin 2\pi jx}{j}$, and  $[x]$ is the
integer part of $x$. That result is also a special case of \eqref{exconv}: take $\b_n=1/j$ if $n =2^j$, $j\ge 1$ and $\b_n=0$ elsewhere. 
\end{example}
\begin{remark} To our knowledge \cite{HW} contains, among other results on $\Phi_\a$, the first convergence result for the series of dilates $\sum_{k=1}^\infty \a_k\psi(kx)$.
 
\end{remark}

Then, we  apply Theorem \ref{interpolation} with the choice  $\m_n = n$, $n\ge 1$ and $\l_k=k$, $k\ge 1$.

\begin{cor} \label{inemax-cor2}   Assume that  $$   \sum_{k\ge 1} |\alpha_k|^p
<\infty \qq  \text{and} 
\qq \sum_{j\ge 1}|b_j|^q
<\infty \,  ,
$$
 for some $1\le p,q\le 2$  such that $1/p+1/q=3/2$. Let  $D(t)=\sum_{\ell\ge 1} b_\ell e^{i\ell t}$. Then the series $\sum_{k\ge 1} 
\alpha_k D(kt)$ converges in $\S^2$ and for $\lambda$-a.e. $t\in \R$.
  \end{cor}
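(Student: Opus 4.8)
The plan is to obtain this statement as a direct specialization of Theorem \ref{interpolation}, applied with the frequency sequences $\lambda_k=k$ ($k\ge 1$) and $\mu_n=n$ ($n\ge 1$), and with the identification $\beta_n=b_n$, so that $D(t)=\sum_{n\ge 1}\beta_n{\rm e}^{i\mu_n t}=\sum_{\ell\ge 1}b_\ell{\rm e}^{i\ell t}$ is precisely the function appearing in the corollary. Both $(\lambda_k)$ and $(\mu_n)$ are non-decreasing, and the exponent constraint $1\le p,q\le 2$ with $1/p+1/q=3/2$ is inherited verbatim. It then remains only to check that the two summability hypotheses of the corollary coincide with condition \eqref{cond-inemax-series-3} of Theorem \ref{interpolation}.

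The key (and essentially the only) observation is that when the frequencies are the positive integers themselves, the unit-length blocks indexing \eqref{cond-inemax-series-3} degenerate to single terms. Indeed, for each integer $n\ge 1$ the set $\{k\,:\,n\le\lambda_k<n+1\}=\{k\,:\,n\le k<n+1\}$ contains the unique index $k=n$, so $\sum_{k\,:\,n\le\lambda_k<n+1}|\alpha_k|=|\alpha_n|$, and the first condition in \eqref{cond-inemax-series-3} reduces to $\sum_{n\ge 1}|\alpha_n|^p<\infty$. In the same way $\sum_{k\,:\,n\le\mu_k<n+1}|\beta_k|=|\beta_n|$, so the second condition reduces to $\sum_{n\ge 1}|\beta_n|^q=\sum_{j\ge 1}|b_j|^q<\infty$. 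These are exactly the two hypotheses assumed in the corollary, hence \eqref{cond-inemax-series-3} holds.

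With all the hypotheses verified, Theorem \ref{interpolation} applies and yields both the maximal inequality \eqref{inemax-series-3} and the convergence of $\sum_{k\ge 1}\alpha_k D(kt)$ in $\S^2$ and for $\lambda$-a.e.\ $t\in\R$, which is precisely the assertion. There is no genuine obstacle here: the whole content is the collapse of the block sums to the plain $\ell^p$ and $\ell^q$ summability conditions. The single point deserving a word of care is that Theorem \ref{interpolation} is phrased for frequencies strictly greater than $1$, whereas here $\lambda_1=\mu_1=1$. This is harmless, since the requirement that the frequencies exceed $1$ is used only to bound the dilation scaling factor $([\mu_k]+1)/\mu_k$, which is already at most $2$ whenever $\mu_k\ge 1$, and both specialized sequences satisfy $\lambda_k,\mu_n\ge 1$.
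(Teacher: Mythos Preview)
Your proof is correct and follows exactly the paper's approach: the paper states the corollary as a direct application of Theorem \ref{interpolation} with $\mu_n=n$ and $\lambda_k=k$, and you have carried out precisely that specialization, observing that the unit-length block sums in \eqref{cond-inemax-series-3} collapse to single terms. Your extra remark about the boundary case $\lambda_1=\mu_1=1$ versus the ``greater than $1$'' hypothesis is a reasonable caution, though the paper does not comment on it.
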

  
\begin{remark}
 Suppose   that $b_j= \mathcal O (1/j^\a)$ for some $1/2<\a\le 1$. Assume that $$   \sum_{k\ge 1} |\alpha_k|^p
<\infty ,
$$
 for some $1\le p<2/(3-2\a)$. Then $\sum_{j\ge 1}|b_j|^q
<\infty$ for $q$ such that $1/p+1/q=3/2$ and we have $1\le p,q\le 2$. We deduce from Corollary \ref{inemax-cor2} that the series $\sum_{k\ge 1} 
\alpha_k D(kt)$ converges in $\S^2$ and  for $\lambda$-a.e. $t\in \R$.
When $1/2<\a< 1$, the nearly optimal sufficient condition $\sum_{k\ge 1} |c_k|^2\exp\big\{\frac{K(\log k)^{1-\a}}{(\log\log k)^\a}\big\}<\infty$ in which $K=K(\a)$ has been recently established in \cite[Theorem 2]{ABSW}. See also \cite[Theorem 3.1]{W} for  conditions of individual type, i.e. depending on the support of the coefficient sequence.
When $\a= 1$, the optimal sufficient coefficient condition, namely that $\sum_{k=1}^\infty |\a_k|^2 (\log\log k)^{2+\e}$ converges for some $\e>0$ suffices for  the convergence almost everywhere, has been recently obtained by Lewko and Radziwill  \cite[Corollary 3]{LR}.

These results are  clearly better. 
However, we note that our results are, even in the trigonometrical case, independent from these ones, and concern a larger class of trigonometrical series $D(t)$.
\end{remark}
 
\begin{proof}[Proof of Theorem \ref{interpolation}] Clearly, we only need to prove \eqref{inemax-series-3}. Let $(\alpha_n)_{n\ge 1}$ and $(\beta_n)_{n\ge 1}$ be 
in $\ell^1(\N)$, fixed for all the proof. Let  $D(t):=\sum_{n\ge 1}\beta_n {\rm e}^{i\mu_n t}$. 
It is enough to prove that for every $N\ge 1$, 
\begin{equation*}
\Big\|\sup_{m=1}^N \big|\sum_{n=1}^m \alpha_n D(\lambda_n t)\big|\, \Big\|_{\S^2}
\le C \Big(\sum_{n\ge 1} \big(\sum_{k\,:\, n\le \lambda_k< n+1}|\alpha_k|\big)^p \Big)^{1/p}\Big(\sum_{n\ge 1} \big(\sum_{k\,:\, n\le
 \mu_k< n+1}|\beta_k|\big)^q\Big)^{1/q }\, ,
\end{equation*}
for a constant $C>0$ not depending on $N$, $(\alpha_n)_{n\ge 1}$ and $(\beta_n)_{n\ge 1}$. 

\smallskip

We shall do that by interpolating \eqref{inemax-series-1} and 
\eqref{inemax-series-2}.

\smallskip

Define Banach spaces as follows 
\begin{gather*}
X_1:=\Big\{(a_n)_{n\ge 1}\in \C^\N \, :\, 
\|(a_n)_{n\ge 1}\|_{X_1}:=\sum_{n\ge 1} \sum_{k\,:\, n\le \lambda_k< n+1}|a_k|<\infty\Big\},\\
X_2:=\Big\{(a_n)_{n\ge 1}\in \C^\N \, :\, 
\|(a_n)_{n\ge 1}\|_{X_2}:=\Big(\sum_{n\ge 1} \big(\sum_{k\,:\, n\le \lambda_k< n+1}|a_k|\big)^2\Big)^{1/2}<\infty\Big\},\\
Y_1:=\Big\{(b_n)_{n\ge 1}\in \C^\N \, :\, 
\|(b_n)_{n\ge 1}\|_{Y_1}:=\sum_{n\ge 1} \sum_{k\,:\, n\le \mu_k< n+1}|b_k|<\infty\Big\},\\
Y_2:=\Big\{(b_n)_{n\ge 1}\in \C^\N \, :\, 
\|(b_n)_{n\ge 1}\|_{Y_1}:=\Big(\sum_{n\ge 1} \big(\sum_{k\,:\, n\le \mu_k< n+1}|b_k|\big)^2\Big)^{1/2}<\infty\Big\}\, .
\end{gather*}

For every $t\in \R$, let 
$$J(t):= \min\Big\{j\in \N\, :\, 1\le j\le N,\, 
|\sum_{n=1}^j \alpha_n D(\lambda_n t)|= \sup_{m=1}^N |\sum_{n=1}^m \alpha_n D(\lambda_n t)| \Big\}.$$ 

Define a linear operator $T$ on $(X_1+X_2)\times (Y_2+Y_1)$ by setting 
$$
T((a_n)_{n\ge 1},(b_n)_{n\ge 1}):= \sum_{k=1}^N {\bf 1}_{\{k\le J(t)\}} 
a_k \Big(\sum_{\ell\ge 1}b_{\ell} {\rm e}^{i\lambda_k\mu_\ell t}\Big)\, .
$$

\smallskip

By Propositions \ref{prop-series-1} and \ref{prop-series-2}, $T $ is continuous from $X_1\times Y_2$ to $\S^2$ and from $X_2\times Y_1$ to 
$\S^2$.

\smallskip

It follows from paragraph 10.1 of Calder\'on \cite{Calderon} that for 
every $s\in [0,1]$ there exists $C_s$ such that, with the notations of \cite{Calderon}
 $$
\| T((a_n)_{n\ge 1},(b_n)_{n\ge 1})\|_{\S^2}\le C_s 
\|(a_n)_{n\ge 1}\|_{[X_1,X_2]_s}\, \|(b_n)_{n\ge 1}\|_{[Y_2,Y_1]_s}\, ,
$$
where 
$$
\|(a_n)_{n\ge 1}\|_{[X_1,X_2]_s}= \inf\{\|f\|_{\F}\, :\, f\in \F,\, 
f(s)=(a_n)_{n\ge 1}\}\, ,
$$
and $\F$ is the Banach space of continuous functions   $f$ from 
$\{z\in \C \,:\,0\le   {\rm Re}\, z \le 1\}$ to 
$X_1+X_2$, analytic on $\{z\in \C \,:\,0<  {\rm Re}\, z < 1\}$ 
such that for every $t\in \R$, $f(it)\in X_1$ and $f(1+it)\in X_2$ with $\lim_{|t|\to+\infty} f(it)=\lim_{|t|\to+\infty} f(1+it)=
0$, endowed with the norm 
$$
\|f\|_\F:= \max\Big(\sup_{t\in \R}\|f(it)\|_{X_1}\,,\,\sup_{t\in \R}
\|f(1+it)\|_{X_2}\Big)\, .
$$
 The norm $\|(b_n)_{n\ge 1}\|_{[Y_2,Y_1]_s}$ is defined similarly.

\smallskip
We shall now give an upper bound for $\|(a_n)_{n\ge 1}\|_{[X_1,X_2]_s}$. 
By homogeneity, we may assume that 
$$
\sum_{n\ge 1}\Big(\sum_{n\le \lambda_k<n+1} |a_k|\Big)^{2/(2-s)}=1\, .
$$

\smallskip

Let $\varepsilon>0$. Define an element   $f_\varepsilon$ of 
$\F$ by setting for every $z\in \C$ such that $0\le  {\rm Re}\, z
\le 1$, $f(z)= (c_n(z))_{n\ge 1}$ where, for every $n\ge 1$ and every 
$k\ge 1$ such that $n\le \lambda_k <n+1$, 
$$
c_k(z)= {\rm e}^{\varepsilon (z^2-s^2)}a_k\Big(\sum_{n\le \lambda_\ell <n+1} 
|a_\ell|\Big)^{(2-z)/(2-s)-1}\, ,
$$
if $\sum_{n\le \lambda_\ell <n+1} |a_\ell| \neq 0$ and $c_k(z)=0$ otherwise.

\smallskip

The introduction of $\varepsilon$ here is a standard trick to ensure 
the assumptions $\lim_{|t|\to+\infty} f_\varepsilon(it)=\lim_{|t|\to+\infty} f_\varepsilon(1+it)=0$.

\smallskip

Notice that $f_\varepsilon(s)= (a_n)_{n\ge 1}$. For every $t\in \R$, 
\begin{gather*}
\|f_\varepsilon(it)\|_{X_1}\le \sum_{n\ge 1} \Big(\sum_{n\le \lambda_k<n+1} |a_k|\Big)^{2/(2-s)}=1\, .
\end{gather*}
Similarly, for every $t\in \R$, 
\begin{gather*}
\|f_\varepsilon(1+it)\|_{X_2}\le {\rm e}^\varepsilon \sum_{n\ge 1} (\sum_{n\le \lambda_k<n+1} |a_k|)^{2/(2-s)}={\rm e}^\varepsilon\, .
\end{gather*}

Letting $\varepsilon\to0$, we infer that 
$$
\|(a_n)_{n\ge 1}\|_{[X_1,X_2]_s}\le 1 =\Big(\sum_{n\ge 1}(\sum_{n\le \lambda_k<n+1} |a_k|)^{2/(2-s)}\Big)^{\frac{2-s}2}\, .
$$
 Similarly, one can prove that

$$
\|(b_n)_{n\ge 1}\|_{[X_1,X_2]_s}\le\Big( \sum_{n\ge 1}(\sum_{n\le \lambda_k<n+1} |b_k|)^{2/(1+s)}\Big)^{\frac{1+s}2}\, .
$$

Taking $s= 2(1-1/p)$, yields the desired result. \end{proof}


\section{A   necessary condition for convergence almost everywhere.}\label{s5} 
Hartman \cite{H} has proved the following result
\begin{theorem}\label{Hartman}  Assume that 
\begin{eqnarray}\label{hadamard}\frac{\l_k}{\l_{k-1}}\ge q>1, \qq\qq k\ge 1.
\end{eqnarray}
 Assume that the   series $\sum_{k=1}^\infty a_k  {\rm e}^{i\l_kt}$ converges for almost all real $t$. Then the series 
$\sum_{k=1}^\infty |a_k|^2$ converges.
\end{theorem}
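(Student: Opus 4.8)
The plan is to prove the stated implication directly by a local-to-global transfer, exploiting the strong near-orthogonality of high-frequency lacunary exponentials. Write $S_N(t)=\sum_{k=1}^N a_k{\rm e}^{i\lambda_k t}$. First I would localize. Since $\sum_k a_k {\rm e}^{i\lambda_k t}$ converges on a set $A$ with $|A|>0$, the function $\sup_N|S_N|$ is finite a.e.\ on $A$; writing $A$ as the increasing union of the sets where this supremum is at most $M$, I fix $M<\infty$ and a bounded set $E\subset A$ with $|E|=\delta>0$ on which $|S_N(t)|\le M$ for every $N$. On $E$ the sequence $S_N$ both converges pointwise a.e.\ and is uniformly bounded, so by dominated convergence it is Cauchy in $L^2(E)$; that is, $\int_E|S_M(t)-S_N(t)|^2\,dt\to 0$ as $M,N\to\infty$.

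The core step is a matching lower bound for these tail $L^2$-norms. Expanding,
\[
\int_E|S_M-S_N|^2\,dt=\delta\!\!\sum_{N<k\le M}\!\!|a_k|^2+\!\!\sum_{\substack{N<j,k\le M\\ j\ne k}}\!\!a_j\overline{a_k}\,\widehat{\chi_E}(\lambda_k-\lambda_j),\qquad \widehat{\chi_E}(\xi)=\int_E{\rm e}^{-i\xi t}\,dt .
\]
The diagonal contributes $\delta\sum_{N<k\le M}|a_k|^2$, and the Hadamard hypothesis enters precisely in controlling the off-diagonal remainder: for tail indices $j\ne k$ with $j,k>N$ one has $|\lambda_k-\lambda_j|\ge(q-1)\lambda_{N+1}\to\infty$, so all frequency differences occurring above are pushed out to infinity and become geometrically separated. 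I would use this, together with the decay of $\widehat{\chi_E}$ at infinity, to show that the Gram matrix $(\widehat{\chi_E}(\lambda_k-\lambda_j))_{N<j,k\le M}$ is, for $N$ large, diagonally dominant and hence $\ge\frac{\delta}{2}I$, giving $\int_E|S_M-S_N|^2\,dt\ge\frac{\delta}{2}\sum_{N<k\le M}|a_k|^2$ for all $M>N\ge N_0$. Combined with the Cauchy property of the previous paragraph this forces $\sum_{N<k\le M}|a_k|^2\to 0$, so the partial sums of $\sum_k|a_k|^2$ form a Cauchy sequence and the series $\sum_k|a_k|^2$ converges.

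I expect the main obstacle to be making the off-diagonal control of the second paragraph rigorous and uniform over an arbitrary (possibly very irregular) set $E$ of positive measure: $\widehat{\chi_E}$ only satisfies Riemann--Lebesgue decay with no rate, so a crude termwise or Hilbert--Schmidt bound need not beat the diagonal. The clean way to quantify exactly what is needed is the order-$4$ correlation sum
\[
\sum_{\substack{k\ne\ell,\ k'\ne\ell'\\ (k,\ell)\ne(k',\ell')}}\big(1-|(\lambda_k-\lambda_\ell)-(\lambda_{k'}-\lambda_{\ell'})|\big)_+^2<\infty,
\]
which governs the fluctuation of $\int_E|S_M-S_N|^2$ through a second-moment argument and which the Hadamard gap condition fulfils: by the geometric growth $\lambda_k/\lambda_{k-1}\ge q>1$ the differences $\lambda_k-\lambda_\ell$ are so well separated that $(\lambda_k-\lambda_\ell)-(\lambda_{k'}-\lambda_{\ell'})$ lies within distance $1$ of $0$ only for finitely many quadruples (essentially the near-unique representations $\pm\lambda_k\pm\lambda_\ell$). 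Thus the cleanest route is to establish this general necessary condition first and then derive Theorem~\ref{Hartman} as the special case in which $(\lambda_k)$ satisfies the Hadamard gap condition, the verification of the correlation bound being the only place the specific lacunary hypothesis is used.
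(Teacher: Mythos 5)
Your overall architecture is the right one, and in fact it is the paper's: the authors do not reprove Hartman's theorem directly either, but establish a general necessary condition (Theorem \ref{Ha}, via the Bellman--Boas almost-orthogonality inequality of Lemma \ref{bb}), specialize it to exponentials with the Fej\'er density $\tau(t)=\frac{1-\cos t}{\pi t^2}$, whose Fourier transform is exactly $(1-|x|)_+$ (Theorem \ref{H}), and then deduce Theorem \ref{Hartman}. Your localization paragraph is sound, and your self-diagnosis of the direct route is accurate: with $\chi_E\,dt$ one only has rate-free Riemann--Lebesgue decay of $\widehat{\chi_E}$, and the rows of the Gram matrix contain unboundedly many entries, so diagonal dominance cannot be extracted; the paper's replacement of $\chi_E\,dt$ by the Fej\'er weight is precisely what turns the off-diagonal control into the exact, compactly supported correlation $\langle g_k\overline{g_\ell},g_{k'}\overline{g_{\ell'}}\rangle_\tau=(1-|(\lambda_k-\lambda_\ell)-(\lambda_{k'}-\lambda_{\ell'})|)_+$ and makes \eqref{sumdiff} the right hypothesis.

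The genuine gap is in your last step, the verification that Hadamard sequences satisfy \eqref{sumdiff}: your claim that $(\lambda_k-\lambda_\ell)-(\lambda_{k'}-\lambda_{\ell'})$ lies within distance $1$ of $0$ for only finitely many quadruples is false. Take $\lambda_k=(3/2)^k$, so $\lambda_2-\lambda_1=3/4<1$; then for every $k\ge 3$ the admissible quadruple $(k,1,k,2)$ gives
\begin{equation*}
(\lambda_k-\lambda_1)-(\lambda_k-\lambda_2)=\lambda_2-\lambda_1=\tfrac34,
\end{equation*}
contributing $(1/4)^2$ each, so $M=\infty$; the same mechanism (quadruples $(k,\ell,k,\ell')$ with two fixed close frequencies, or $(k,\ell,k',\ell)$ with two close frequencies $\lambda_k,\lambda_{k'}$) defeats any rescaled threshold as well, since the lacunarity hypothesis places no lower bound on $\lambda_2-\lambda_1$. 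So Theorem \ref{Hartman} is \emph{not} literally a special case of Theorem \ref{H}, and your derivation does not close. The paper's fix is Remark \ref{rHartman}: by Hartman's argument the differences $\lambda_k-\lambda_\ell$, $k\neq\ell$, split into \emph{finitely many} subsequences, each $1$-separated; one then runs the Bellman--Boas step of the proof of Theorem \ref{Ha} on each subfamily separately (within each, the off-diagonal Gram entries $(1-|d-d'|)_+$ vanish identically), and the finite union still yields a uniform bound on the partial sums of $\sum_{k\neq\ell}\big|\int_A g_k\overline{g_\ell}\,d\tau\big|^2$, hence convergence of that series, small tails, and the contradiction \eqref{squaring2} exactly as before. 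To repair your write-up you need either this finite-decomposition argument or some other substitute; as it stands, the only place where the lacunary hypothesis is used is the one place where your argument is wrong.
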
 
 The proof  is  similar to the one of Zygmund \cite[Proof of Lemma 6.5, Ch. V]{Z} (see also  p.\,120--122 of the 1935's Edition). 
\begin{remark} \label{kac} The converse of Theorem \ref{Hartman} is due to Kac \cite{Ka}. If  $\sum_{k=1}^\infty |a_k|^2$ converges, then the   series $\sum_{k=1}^\infty a_k  {\rm e}^{i\l_kt}$ with $(\l_k)_{k\ge 1}$ verifying \eqref{hadamard}, converges for almost all real $t$. Kac's proof is a modification of Marcinkiewicz's. See Remark \ref{marcin}. In place of Fejer's theorem, another summation method is used. See Theorem 13 and  pages 84--85 in \cite{Ti}, and Theorem 21 in \cite{HR}. \end{remark} \vskip 2 pt Theorem \ref{Hartman} can be extended in the following way. 
 \begin{theorem}\label{H}Let $\{\l_k, k\ge 1\}$ be a increasing sequence of positive reals satisfying the  following condition
\begin{eqnarray}\label{sumdiff}M:=\sum_{  k\not=\ell\,,\,  k'\not=\ell'\atop (k,\ell)\neq(k',\ell')  
  }
\big(1-|(\l_k-\l_\ell)-(\l_{k'}-\l_{\ell'}) |\big)_+^2 \, <\infty .
 \end{eqnarray}
Assume that 
\begin{eqnarray}\label{locaecv}\l \Big\{\sum_{k} a_k {\rm e}^{i\l_kt}\ {\rm converges}\Big\}>0.
 \end{eqnarray}  Then the series  $\sum_{k=1}^\infty |a_k|^2$
converges.\end{theorem}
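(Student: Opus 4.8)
The plan is to prove the contrapositive by a second-moment argument along a carefully chosen subsequence, following the Zygmund--Hartman scheme that the authors cite. Suppose the series $\sum_k a_k {\rm e}^{i\l_k t}$ converges on a set $E$ with $\l(E)>0$, and suppose toward a contradiction that $\sum_k |a_k|^2=+\infty$. Since the partial sums converge on $E$, they are in particular bounded on $E$, so there exist a subset $E'\subset E$ with $\l(E')>0$ and a finite constant $B$ such that $\sup_N |\sum_{k=1}^N a_k {\rm e}^{i\l_k t}|\le B$ for all $t\in E'$. The strategy is to integrate the squared modulus of a partial sum over a suitable window and show that the left-hand side forces $\sum |a_k|^2<\infty$ after all.

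First I would fix a large interval $I\subset \R$ and study, for a partial sum $P_N(t)=\sum_{k=1}^N a_k {\rm e}^{i\l_k t}$, the quantity $\int_I |P_N(t)|^2 \varphi(t)\,\dd t$ for a smooth bump $\varphi$ supported near $I$. Expanding the square gives a diagonal contribution $\big(\sum_{k\le N}|a_k|^2\big)\int \varphi$ plus off-diagonal terms $\sum_{k\neq \ell} a_k \bar a_\ell \,\widehat{\varphi}(\l_k-\l_\ell)$. The key point provided by condition \eqref{sumdiff} is a control on the fourth-order correlations of the frequencies: it bounds $\sum (1-|(\l_k-\l_\ell)-(\l_{k'}-\l_{\ell'})|)_+^2$, which is exactly what one needs to estimate the variance (second moment) of the off-diagonal quadratic form. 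Concretely, if $\sum|a_k|^2$ diverges, one normalizes and shows that the centered quadratic form $\sum_{k\neq \ell} a_k \bar a_\ell \,{\rm e}^{i(\l_k-\l_\ell)t}$, viewed as a random-like function of $t$, has an $L^2(I)$ norm controlled by $M^{1/2}\sum|a_k|^2$, so that for most $t$ it is of smaller order than the diagonal term. This is the mechanism by which boundedness of the partial sums on a positive-measure set contradicts divergence of $\sum|a_k|^2$.

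More precisely, I would proceed as follows. By Bessel-type orthogonality on a long interval $[-T,T]$, one has $\frac1{2T}\int_{-T}^T |P_N(t)|^2\,\dd t \to \sum_{k\le N}|a_k|^2$ as the off-diagonal terms $\frac1{2T}\int_{-T}^T {\rm e}^{i(\l_k-\l_\ell)t}\dd t$ vanish; the difficulty is that we only control $P_N$ on $E'$, not on all of $\R$. The device is to replace the global average by an average against a nonnegative kernel $K$ whose Fourier transform is supported in $[-1,1]$ (e.g. a Fej\'er-type kernel), so that only pairs $(k,\ell)$ with $|\l_k-\l_\ell|\le 1$ survive. Then I estimate $\int_{E'} |P_N|^2 K \le B^2 \int K$, giving an $O(1)$ upper bound, while the expansion of $\int |P_N|^2 K$ over all of $\R$ equals $\sum_{k\le N}|a_k|^2 \,\widehat K(0)$ plus an off-diagonal remainder whose square is dominated, via Cauchy--Schwarz and condition \eqref{sumdiff}, by a term of size $M^{1/2}\big(\sum|a_k|^2\big)$. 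Writing $\int_{\R}=\int_{E'}+\int_{E'^c}$ and bounding the complement by the same quadratic-form $L^2$ estimate lets one trap $\sum_{k\le N}|a_k|^2$ between quantities that stay bounded as $N\to\infty$, which is the desired contradiction.

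The main obstacle will be the passage from the hypothesis of convergence on a set of positive measure to a genuinely usable quantitative bound on the tail off-diagonal sum, i.e.\ making the "only if" direction of the kernel argument rigorous: one must show that the portion of $\int_{\R}|P_N|^2 K$ coming from the complement $E'^c$ does not secretly reabsorb the divergent diagonal mass. This is exactly where \eqref{sumdiff} is essential — it is the hypothesis that lets one bound $\|\sum_{k\neq\ell} a_k\bar a_\ell {\rm e}^{i(\l_k-\l_\ell)\cdot}\|_{L^2(K\,\dd t)}$ by $M^{1/2}\sum|a_k|^2$ and thereby control the cross terms uniformly in $N$. Once that variance bound is in hand, the contradiction with boundedness on $E'$ is a routine Chebyshev/Paley--Zygmund step, and I expect the Sidon-sequence corollary to follow because for a Sidon sequence the differences $\l_k-\l_\ell$ repeat only boundedly often, making $M$ manifestly finite.
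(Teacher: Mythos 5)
Your plan correctly identifies the two structural ingredients that the paper also uses: the Fej\'er-type weight whose Fourier transform is the triangle function (the paper takes exactly the probability density $\tau(t)=\frac{1-\cos t}{\pi t^2}$, so that $\int_\R {\rm e}^{ixt}\tau(\dd t)=(1-|x|)_+$ and condition \eqref{sumdiff} becomes a fourth-order correlation bound for the system $g_k(t)={\rm e}^{i\l_k t}$), and the role of \eqref{sumdiff} as a control on correlations of the products $g_k\overline{g_\ell}$. But the quantitative core of your argument does not close. Your claimed variance estimate $\|\sum_{k\neq\ell}a_k\bar a_\ell{\rm e}^{i(\l_k-\l_\ell)\cdot}\|_{L^2(K\dd t)}\le M^{1/2}\sum|a_k|^2$ is not correct as stated: expanding $\int|Q_N|^2K$ over quadruples, the ``diagonal'' quadruples $(k,\ell)=(k',\ell')$ alone contribute $\sum_{k\neq\ell}|a_k|^2|a_\ell|^2\approx D_N^2$ with $D_N=\sum_{k\le N}|a_k|^2$, \emph{even when} $M=0$ (the Sidon case). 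So the best available bound is $\|Q_N\|_{L^2(K)}\le (1+M^{1/2})^{1/2}D_N$, i.e.\ of the \emph{same order} as the diagonal term of $|P_N|^2$, not smaller. Consequently the final absorption fails: your trapping inequality reads $D_N\,\tau(E')\le B^2\tau(E')+C_M D_N\,\tau(E')^{1/2}$, and since $C_M\tau(E')^{1/2}$ need not be smaller than $\tau(E')$ (the set $E'$ has some fixed, possibly tiny, positive measure while $C_M\ge 1$), no contradiction results. The same defect kills the ``routine Chebyshev/Paley--Zygmund step'': a second moment comparable to the square of the mean only shows $|P_N|^2\gtrsim D_N$ on a set of measure $\ge c_M>0$, and $c_M$ can be far smaller than $1-\l(E')$, so boundedness on $E'$ is never contradicted.

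The paper escapes exactly this trap by two ideas absent from your proposal. First, it does not use mere boundedness of the full partial sums: by the Cauchy criterion it extracts a set $A$ of positive $\tau$-measure on which \emph{all tail blocks} satisfy $|\sum_{k=v}^u a_kg_k(t)|\le\eps$ for $u>v>V$, and works with $\sum_{k=n}^m$ rather than $\sum_{k=1}^N$. Second — and this is the key replacement for your global variance bound — it estimates the correlations \emph{restricted to $A$}, namely $\int_A g_k\overline{g_\ell}\,\dd\tau$, by applying the Bellman--Boas almost-orthogonality inequality to $x=\chi_A$ and the family $y=g_k\overline{g_\ell}$, which together with \eqref{sumdiff} yields
\begin{equation*}
\sum_{k\neq\ell}\Big|\int_A g_k\overline{g_\ell}\,\dd\tau\Big|^2\le \tau(A)^2\big(K^2+M^{1/2}\big)<\infty\,.
\end{equation*}
Finiteness of this series (independently of the coefficients $a_k$) forces its \emph{tails} over $n\le k\neq\ell\le m$ to be $\le\tau(A)^2/4$ for $n$ large, whence by Cauchy--Schwarz the off-diagonal part of $\int_A|\sum_{k=n}^m a_kg_k|^2\dd\tau$ is at most $\frac{\tau(A)}{2}\sum_{k=n}^m|a_k|^2$ and is absorbed by the diagonal $\tau(A)\sum_{k=n}^m|a_k|^2$; combined with the $\eps^2\tau(A)$ upper bound from the definition of $A$ this gives $\sum_{k=n}^m|a_k|^2\le2\eps^2$, contradicting divergence. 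Note that your weight $K$ plays, in the paper, the role of the underlying measure $\tau$ itself, and that the correlation smallness is a property of the pair $(A,\{\l_k\})$, decoupled from the $a_k$ — this decoupling is precisely what your coefficient-laden variance bound cannot provide. To repair your proof you would need both missing ingredients, at which point you essentially reproduce the paper's proof of its Theorem \ref{Ha}.
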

 \begin{remark} By  considering  integers $k$ such that $n\le \l_k<n+1/2$, next those such that $n+1/2\le \l_k\le n+1$, we observe that condition \ref{sumdiff} implies that
 $$ \sup_{n} \#\big\{k: n\le \l_k<n+1\big\}<\infty.$$
  \end{remark}
 We give an application. Recall that  a Sidon sequence   is a set of integers with the property that the pairwise sums of elements are all distinct. 
 \vskip 3 pt 
  As a corollary we get
\begin{cor}\label{sidon} Let  $\{\l_k, k\ge 1\}$ be a Sidon sequence.  Assume that  \eqref{locaecv} is satisfied. Then the
series  $\sum_{k=1}^\infty |a_k|^2$ converges.
\end{cor}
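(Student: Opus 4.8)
The plan is to deduce Corollary \ref{sidon} directly from Theorem \ref{H} by checking that every Sidon sequence satisfies condition \eqref{sumdiff}, in fact with $M=0$. The whole argument reduces to translating the defining combinatorial property of a Sidon set into the vanishing of every summand of $M$.

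First I would recall the standard equivalence for Sidon sets: a set of integers has all pairwise sums distinct if and only if all pairwise differences $\l_i-\l_j$ (with $i\neq j$) are distinct. Indeed, suppose $\l_k-\l_\ell=\l_{k'}-\l_{\ell'}$ with $k\neq\ell$ and $k'\neq\ell'$. Then $\l_k+\l_{\ell'}=\l_{k'}+\l_\ell$, and distinctness of the pairwise sums forces the unordered pairs $\{\l_k,\l_{\ell'}\}$ and $\{\l_{k'},\l_\ell\}$ to coincide. The alternative $\l_k=\l_\ell$ is excluded since $k\neq\ell$ and the sequence is increasing, so necessarily $\l_k=\l_{k'}$ and $\l_\ell=\l_{\ell'}$; injectivity then gives $(k,\ell)=(k',\ell')$.

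Next I would observe that for any quadruple entering the sum in \eqref{sumdiff}, i.e. with $k\neq\ell$, $k'\neq\ell'$ and $(k,\ell)\neq(k',\ell')$, the two differences $\l_k-\l_\ell$ and $\l_{k'}-\l_{\ell'}$ are therefore unequal. Since the $\l_k$ are integers, $(\l_k-\l_\ell)-(\l_{k'}-\l_{\ell'})$ is a nonzero integer and hence has absolute value at least $1$. Consequently $\big(1-|(\l_k-\l_\ell)-(\l_{k'}-\l_{\ell'})|\big)_+=0$ for every such term, so $M=0<\infty$, and \eqref{sumdiff} is verified.

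With condition \eqref{sumdiff} in hand, Theorem \ref{H} applies verbatim under the hypothesis \eqref{locaecv} and yields the convergence of $\sum_{k=1}^\infty|a_k|^2$, completing the proof. There is no genuine obstacle here: the entire content is the passage from the Sidon (distinct-differences) property to the statement that each truncated term $\big(1-|\cdot|\big)_+$ vanishes, with the integrality of the $\l_k$ providing the gap of size at least $1$ that annihilates every summand.
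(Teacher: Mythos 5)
Your proposal is correct and follows essentially the same route as the paper: both arguments rewrite $\l_k-\l_\ell=\l_{k'}-\l_{\ell'}$ as $\l_k+\l_{\ell'}=\l_\ell+\l_{k'}$, invoke the Sidon property to force either $(k,\ell)=(k',\ell')$ or $k=\ell$, rule both out, and conclude that every summand in \eqref{sumdiff} vanishes so that $M=0$ and Theorem \ref{H} applies. The only difference is cosmetic and in your favor: you make explicit the step the paper leaves implicit, namely that integrality of a Sidon sequence makes the nonzero quantity $(\l_k-\l_\ell)-(\l_{k'}-\l_{\ell'})$ have absolute value at least $1$, which is what actually annihilates each term $\bigl(1-|\cdot|\bigr)_+$.
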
 

\begin{remark} In contrast with Hadamard gap sequences,   Sidon sequences may grow  at most polynomially. See \cite{R} where it is for instance proved
  that  the sequence $\{n^5 + [\xi n^4], n\ge n_0\}$    is for some real number $\xi\in [0,1]$  and $n_0$ large, a Sidon sequence.  
  \end{remark}
\begin{proof}[Proof of Corollary \ref{sidon}] 
Let $ (k,\ell)\neq(k',\ell') $ with  $k\not=\ell$ and $ 
k'\not=\ell'$. As the equation $ \l_k-\l_\ell = \l_{k'}-\l_{\ell'} $ means $ \l_k+\l_{\ell'}= \l_\ell + \l_{k'}   $,   the fact that $\{\l_k, k\ge 1\}$ is
a Sidon sequence implies that the only possible solutions are   $k=k'$, $\ell'=\ell$ or $k=\ell$, $\ell'=k'$. The last one is impossible  by assumption,
and the first would mean that  $ (k,\ell)=(k',\ell') $ which is   excluded. Consequently,
 $\l_k-\l_\ell \neq \l_{k'}-\l_{\ell'}$. Hence the sum in
\eqref{sumdiff} is always zero.
\end{proof} 
 \begin{remark}\label{rHartman} 
It follows from Hartman's proof  that under condition \eqref{hadamard}, the sequence of differences  $\l_k-\l_\ell$, $k\not= \ell$ is a finite union of subsequences such that the difference of  any two numbers of the same subsequence  exceeds $1$.  These subsequences  fulfill  assumption \eqref{sumdiff} of Theorem \ref{H}, and thus Theorem \ref{Hartman} follows from Theorem \ref{H}.
\end{remark}
Theorem \ref{H} is a consequence of the following general necessary condition for almost everywhere convergence of series of functions.
 \begin{theorem}\label{Ha}Let $(X,\mathcal B,\tau)$ be a probability space. Let $\{g_k, k\ge 1\}\subset L^4(\tau)$   be a  sequence of functions with $\|g_k\|_{2,\tau}= 1$,  $\|g_k\|_{4,\tau}\le K$ and satisfying the  following condition  
\begin{eqnarray}\label{sumdiff1}M:=\sum_{  k\not=\ell\,,\,  k'\not=\ell'\atop (k,\ell)\neq(k',\ell')  
  } 
\big|\langle g_k \overline{g_\ell} ,g_{k'} \overline{g_{\ell'}} \rangle_\tau \big|^2   \, <\infty .
 \end{eqnarray}

Assume that 
\begin{eqnarray}\label{locaecv1}\tau  \Big\{ \sum_{k} a_k g_k(t) \ {\rm converges}\Big\}>0.
 \end{eqnarray}  Then the series  $\sum_{k=1}^\infty |a_k|^2$
converges.\end{theorem}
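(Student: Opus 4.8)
The plan is to prove the contrapositive via a second-moment / orthogonality argument. Suppose $\sum_k |a_k|^2 = \infty$; I will show that $\sum_k a_k g_k$ diverges $\tau$-a.e., contradicting \eqref{locaecv1}. The key idea is that almost-everywhere convergence of $\sum_k a_k g_k$ on a set of positive measure forces the partial sums $S_n = \sum_{k=1}^n a_k g_k$ to be bounded (along some subsequence, on a positive-measure subset), and in particular the increments $S_n - S_m$ must be small on a large set. The machinery to exploit this is a Paley--Zygmund-type lower bound combined with control of the fourth moment of a block of the series, which is exactly where condition \eqref{sumdiff1} enters.

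First I would isolate a ``block'' $B = \sum_{k \in I} a_k g_k$ over a finite index set $I$, and compute its second moment $\E|B|^2 = \sum_{k,\ell \in I} a_k \overline{a_\ell} \langle g_k, \overline{g_\ell}\rangle_\tau$; the diagonal gives $\sum_{k\in I}|a_k|^2$ (using $\|g_k\|_2 = 1$). The essential second step is to bound the fourth moment $\E|B|^4$. Expanding $|B|^4 = B^2 \overline{B}^2$ produces a sum over quadruples of terms $a_k a_{k'} \overline{a_\ell a_{\ell'}} \langle g_k \overline{g_\ell}, g_{\ell'}\overline{g_{k'}}\rangle_\tau$; grouping the ``diagonal'' contributions (where the inner product is close to its trivially bounded value, using $\|g_k\|_4 \le K$) against the genuinely off-diagonal contributions, the latter are controlled by Cauchy--Schwarz against $M$ in \eqref{sumdiff1}. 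The expected outcome is an estimate of the shape $\E|B|^4 \le C(K)\big(\sum_{k\in I}|a_k|^2\big)^2 + \sqrt{M}\,\big(\sum_{k\in I}|a_k|^2\big)^2$ or similar, i.e. a hypercontractivity-style bound $\E|B|^4 \le C\,(\E|B|^2)^2$ with $C$ depending only on $K$ and $M$.

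With the fourth-moment bound in hand, the Paley--Zygmund inequality yields a universal lower bound: $\tau\{|B| \ge \tfrac12 (\E|B|^2)^{1/2}\} \ge c > 0$, with $c$ depending only on $K$ and $M$. Since $\sum_k|a_k|^2 = \infty$, I can select disjoint consecutive blocks $I_1, I_2, \dots$ with $\sum_{k \in I_j}|a_k|^2 \ge 1$ for each $j$, so each block $B_j = \sum_{k\in I_j} a_k g_k$ satisfies $\tau\{|B_j| \ge \tfrac12\} \ge c$. The blocks are increments of the partial-sum sequence, so if $\sum_k a_k g_k$ converged on a set $E$ of positive measure, then $B_j \to 0$ a.e. on $E$, forcing $\tau(E \cap \{|B_j| \ge \tfrac12\}) \to 0$. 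Comparing with the uniform lower bound $\tau\{|B_j|\ge \tfrac12\} \ge c$ (and noting the events occur infinitely often) contradicts $\tau(E) > 0$; a clean way to phrase this is via Fatou/Borel--Cantelli applied to the indicator functions restricted to $E$, or simply $\limsup_j \tau(E \cap \{|B_j|\ge \tfrac12\}) \ge c\,\tau(E) - (\text{convergence loss})$, which cannot vanish.

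The main obstacle I anticipate is the fourth-moment estimate, specifically extracting from $M < \infty$ a bound that is uniform over all finite blocks $I$ and independent of which coefficients $a_k$ appear. The off-diagonal sum $\sum a_k a_{k'}\overline{a_\ell a_{\ell'}}\langle g_k\overline{g_\ell}, g_{k'}\overline{g_{\ell'}}\rangle_\tau$ must be dominated by $\big(\sum|a_k|^2\big)^2$, and the natural route is Cauchy--Schwarz splitting the coefficient weights from the correlation factors, using $\sum_{(k,\ell)\ne(k',\ell')}|\langle g_k\overline{g_\ell}, g_{k'}\overline{g_{\ell'}}\rangle_\tau|^2 = M$; the delicate bookkeeping is in correctly separating the terms already counted in the diagonal ($k=k'$, $\ell=\ell'$ or $k=\ell'$, $\ell=k'$) from those controlled by $M$, and in verifying that the remaining diagonal-type terms are absorbed by the $\|g_k\|_4 \le K$ hypothesis. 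Once this uniform hypercontractive inequality is established, the passage to Theorem \ref{H} is immediate by taking $g_k(t) = {\rm e}^{i\lambda_k t}$ on a suitable probability space, where $\langle g_k \overline{g_\ell}, g_{k'}\overline{g_{\ell'}}\rangle$ reduces to the Fejér-type kernel value appearing in \eqref{sumdiff}.
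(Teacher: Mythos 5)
Your endgame does not close, and the gap is not cosmetic. Paley--Zygmund with the fourth-moment bound can only give $\tau\{|B_j|\ge \tfrac12\}\ge c$ for some constant $c=c(K,M)$ that is strictly less than $1$. If the series converges on a set $E$ with $0<\tau(E)\le 1-c$, this is perfectly compatible with $B_j\to 0$ on $E$: the events $\{|B_j|\ge\tfrac12\}$ may simply live inside $E^c$ for every $j$, so $\tau(E\cap\{|B_j|\ge\tfrac12\})=0$ throughout and no contradiction arises. Your claimed inequality $\limsup_j \tau(E\cap\{|B_j|\ge \tfrac12\})\ge c\,\tau(E)-(\text{convergence loss})$ has no justification; to place a fixed proportion of the event inside $E$ one would need independence, a zero--one law, or $c$ arbitrarily close to $1$, none of which follows from \eqref{sumdiff1} for general $g_k$ (recall \eqref{locaecv1} only asserts \emph{positive} measure). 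Nor can you rescue this by conditioning on $E$: under $\tau(\cdot\,|\,E)$ the normalizations $\|g_k\|_2=1$, the bound $K$, and condition \eqref{sumdiff1} are all lost. The paper's proof is built precisely to avoid this trap: it localizes the \emph{second}-moment computation to a set $A$ of positive measure on which all tails $|\sum_{k=v}^u a_kg_k|$, $u>v>V$, are $\le\varepsilon$, expands $\int_A|\sum_{k=n}^m a_kg_k|^2\,d\tau = \tau(A)\sum_{k=n}^m|a_k|^2 + \text{cross}$, and controls the localized correlations $\int_A g_k\overline{g_\ell}\,d\tau$ by the Bellman--Boas almost-orthogonality inequality applied with $x=\chi_A$ against the system $\{g_k\overline{g_\ell}\}$; condition \eqref{sumdiff1} then forces $\sum_{k\ne\ell}\big|\int_A g_k\overline{g_\ell}\,d\tau\big|^2<\infty$, so for $n$ large the cross terms absorb at most half the diagonal, yielding $\tfrac{\tau(A)}{2}\sum_{k=n}^m|a_k|^2\le \varepsilon^2\tau(A)$ and hence convergence of $\sum|a_k|^2$. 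The indicator-function use of Bellman--Boas is the missing idea; no fourth moments are needed at all.

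Separately, your fourth-moment estimate as sketched is also incomplete. The expansion of $\E|B|^4$ contains mixed quadruples with $k\ne\ell$ but $k'=\ell'$ (and symmetrically $k=\ell$, $k'\ne\ell'$); these are \emph{excluded} from the sum $M$ in \eqref{sumdiff1}, and the crude bound $|\langle g_k\overline{g_\ell},|g_{k'}|^2\rangle|\le K^4$ only yields $\sum_{k\ne\ell\in I}|a_ka_\ell|\cdot\sum_{k'}|a_{k'}|^2$, which can be of order $|I|\big(\sum_{k\in I}|a_k|^2\big)^2$ and is not uniform in the block. (This can be repaired by an absorption argument, writing the mixed sum as $\sum_{k'}|a_{k'}|^2\langle |B|^2-\sum_k|a_k|^2|g_k|^2,|g_{k'}|^2\rangle$ and solving the resulting quadratic inequality $x\le C_1\sigma^2+C_2K^2\sigma\sqrt{x}$ for $x=\E|B|^4$, but your sketch does not see the issue.) Likewise, even your second-moment identity is not what you state: $\E|B|^2$ has off-diagonal terms $\sum_{k\ne\ell}a_k\overline{a_\ell}\langle g_k,g_\ell\rangle$, and neither $M$ nor $K$ controls $\langle g_k,g_\ell\rangle$ termwise; square-summability of these pairwise correlations again comes only from Bellman--Boas (applied with $x=1$). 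So even the preparatory moment bounds require the very lemma the paper invokes, and after establishing them the positive-measure endgame still fails as explained above.
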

\begin{proof}[Proof of Theorem \ref{Ha}] We use Hartman's method and the below   classical generalization of Bessel's inequality.  
\begin{lem} {\rm (Bellman-Boas' inequality)} \label{bb}Let  $x,y_1,\ldots, y_n$ be elements of an inner product space $(H,\langle .,.\rangle)$, then 
$$ \sum_{i=1}^n |\langle x,y_i\rangle |^2\le \|x\|^2\Big\{\max_{1\le i\le n}\|y_i\|^2+ \Big(\sum_{1\le i\not=j\le n} |\langle
y_i,y_j\rangle |^2\Big)^{1/2}\Big\}.$$
\end{lem}
See \cite{B} for instance. 
 As
$$\Big\{t:\sum_{k} a_k g_k(t)\ {\rm converges}\Big\}= \bigcap_{\e>0}\bigcup_{V}\bigcap_{u>v>V}\Big\{ t:\,\big|\sum_{k=v}^u a_k g_k(t)\big|\le \e\,\Big\},$$ by assumption it follows that for any $\e>0$, there exists an integer $V$ such that if $$A:=
\bigcap_{u>v>V}\Big\{ \,\big|\sum_{k=v}^u a_k g_k(t)\big|\le \e\,\Big\},$$
then 
 \begin{eqnarray}\label{A}\tau(A)>0
 .\end{eqnarray}
Assume the series  $\sum_{k\ge 1}  |a_k|^2$ is divergent. We are going to prove that this will contradict \eqref{A}. 

\vskip 2 pt By squaring out, 
  \begin{eqnarray}\label{squaring} \int_A\Big|\sum_{k=n}^m a_k g_k(t)\Big|^2\tau(\dd t)&=& \tau(A)\sum_{k=n}^m |a_k|^2 + \sum_{k,\ell=n\atop k\neq \ell}^m
a_k\overline{a}_\ell\int_A g_k(t)\overline{g_\ell}(t)\tau(\dd t).
\end{eqnarray}
 By using Cauchy-Schwarz's inequality, 
 \begin{eqnarray*}  \Big|  \sum_{k,\ell=n\atop k\neq \ell}^m
a_k\overline{a}_\ell\int_A g_k(t)\overline{g_\ell}(t)\tau(\dd t) \Big|& \le & \Big(  \sum_{k,\ell=n\atop k\neq \ell}^m
|a_k|^2|a_\ell|^2  \Big)^{1/2}  \Big(\sum_{k,\ell=n\atop k\neq \ell}^m
 \Big|\int_A g_k(t)\overline{g_\ell}(t)\tau(\dd t)\Big|^2 \Big)^{1/2}.\end{eqnarray*}
 Applying Lemma \ref{bb} to the system of vectors of  
$L^2_\tau (\R)$, $\chi(A)$,  $g_k(t)\overline{g_\ell}(t)$, $n\le k,\ell\le m$, 
gives in view of the assumption made,
\begin{eqnarray*} \sum_{k,\ell=n\atop k\neq \ell}^m
 \Big|\int_A g_k(t)\overline{g_\ell}(t)\tau(\dd t)\Big|^2&\le & \tau(A)^2  \Big\{ K^2+ \Big(\sum_{{{(k,\ell)\neq(k',\ell')
}\atop n\le k\neq\ell\le m}\atop n\le k'\neq\ell'\le m}
\big|\langle g_k \overline{g_\ell} ,g_{k'} \overline{g_{\ell'}} \rangle_\tau \big|^2\Big)^{1/2}   \Big\}
\cr &\le  & \tau(A)^2   \big\{  K^2+
M^{1/2}  
\big\}.\end{eqnarray*}
Letting $n,m$ tend to infinity, it follows that the series 
$\sum_{ k\neq \ell}  \big|\int_A g_k(t)\overline{g_\ell}(t)\tau(\dd t)\big|^2$   converges. Consequently,  for all $m>n$, $n$  sufficiently large ($n>N$, $N$ depending on $A$) we have
\begin{eqnarray*} \sum_{k,\ell=n\atop k\neq \ell}^m
  \Big|\int_A g_k(t)\overline{g_\ell}(t)\tau(\dd t)\Big|^2 &\le  & \tau(A)^2 /4 . \end{eqnarray*}There is no loss to assume $N>V$, which we do. Therefore
\begin{eqnarray*}  \Big|  \sum_{k,\ell=n\atop k\neq \ell}^m
a_k\overline{a}_\ell\int_A g_k(t)\overline{g_\ell}(t)\tau(\dd t) \Big|& \le &    \Big(  \sum_{k,\ell=n\atop k\neq \ell}^m
|a_k|^2|a_\ell|^2  \Big)^{1/2}\Big(\frac{\tau(A)}{2}\Big).\end{eqnarray*}
This along with \eqref{squaring} implies 
\begin{eqnarray}\label{squaring1} \int_A\Big|\sum_{k=n}^m a_kg_k(t)\Big|^2\tau(\dd t)&\ge & \Big(\frac{\tau(A)}{2}\Big)\sum_{k=n}^m |a_k|^2,\end{eqnarray}
for all $m>n>N$.
\vskip 2 pt  We get
\begin{eqnarray} \label{squaring2}\Big(\frac{\tau(A)}{2}\Big)\sum_{k=n}^m |a_k|^2\le   \int_A\Big|\sum_{k=n}^m a_k g_k(t)\Big|^2\tau(\dd t)\le \e^2  \tau(A)  ,
\end{eqnarray}
where for the last inequality we have used the fact that $N>V$ and the  definition of $A$.
\vskip 2 pt
 We are now free to let $m$ tend to infinity in \eqref{squaring2}, which we do. We deduce that  necessarily  $\tau(A)=0$. Hence a contradiction with
\eqref{A}.

This achieves the proof.
 \end{proof} 

\begin{proof}[Proof of Theorem \ref{H}] Choose $\tau(\dd t) $ as  the density  
function on the real line  associated to 
 $\tau (  t) =\frac{1-\cos t}{\pi  t^2}$. Then 
  $$\int_\R \tau (\dd t) =1, \qq \int_\R {\rm e}^{ixt}\tau (\dd t) = \big( 1- {|x|} \big)_+. $$

Since $\tau$ is absolutely continuous with respect to the Lebesgue measure,  \eqref{locaecv} holds with $\tau$ in place of $\l$. 
Next choose $g_k(t)= {\rm e}^{i\l_k t}$. We have 
$$\langle g_k \overline{g_\ell} ,g_{k'} \overline{g_{\ell'}} \rangle_\tau=  \big(1-|(\l_k-\l_\ell)-(\l_{k'}-\l_{\ell'}) |\big)_+  \, .
$$
Condition \eqref{sumdiff1} is thus fulfilled. Theorem \ref{Ha} applies and we deduce that the  series  $\sum_{k=1}^\infty |a_k|^2$
converges.\end{proof}

\vskip 5 pt  {\bf Final note.} While finishing this paper, we discovered that Theorem \ref{gen-prop} was proved by Guniya \cite{Guniya}  using a completely different  method from ours.
Guniya's proof makes use of Wiener's result \cite{Wi} (previously mentionned) and does not seem to provide directly a maximal inequality.  
Our proof is somewhat more elementary. Moreover  it allows to 
recover Wiener's result and provides at the same time a maximal  inequality.
     It seems that   Guniya's paper  has been completely overlooked among the mathematical community. We observe in particular that Theorem \ref{gen-prop} notably  includes     obviously
 Hedenmalm and  Saksman result \cite{HS} published nearly twenty years after \cite{Guniya}.
 \vskip 2pt We now briefly explain Guniya's approach (see Theorem 1.2, (8) and Lemmas after and  paragraph   2.10). 
The proof follows from the combination of several different results proved in the paper, and is based on Riemann theory of trigonometric series \cite[Ch.\,XVI-8]{Z}. 
Assume that the coefficients are positive. Then the series $\sum_{n} c_n e^{i\lambda_n x}$
 converges in  $\S^2$ to some $f$.
Let $I,J$ be two intervals  with $|I|<2\pi$,   $|J|=2\pi$ and $I\not\subseteq  J$. Let $F$ be represented by the term by term integrated Fourier series of $f$, and let $L$ be a bump function of class $C^5$ equal to $1$ on $I$ and to $0$ on $J\backslash I'$ where $I\subset I'\not\subseteq  J$. Then by a theorem due to Zygmund (see \cite[Theorem 9.19]{Z}), the partial sums of the Fourier series of $f$ are uniformly equiconvergent  on $I$ with the partial sum of a trigonometric series  $\sum_m a_m e^{imx}$. 
Next,  if  $FL$ admits a second order   derivative  in the sense of distributions, say $g$, then the above  trigonometric series is the one of $g$. And the a.e. convergence  on $I$ follows from Carleson's theorem. lt remains to prove that under condition \ref{Wiener-cond},  $F$ has indeed second order Schwarz derivatives,  controlled by the $L^2$ norm of $f$, which should follow from Theorem 2.2 in \cite{Guniya}. 
\vskip 17 pt \noi   {\bf Acknowlegements} 
\vskip 9 pt Part of that work has been carried out while the first author was
 member of the laboratory MICS from CentraleSupelec.  The authors are  grateful to    Anna Rozanova-Pierrat  and  Vladimir Fock for translating    the paper \cite{Guniya}. The second author is pleased to thank Michael Lacey for discussions on  Carleson's theorem for integrals  and its equivalence with Carleson's theorem for series (see Remark \ref{2.3}), and for a proof of this equivalence \cite{La}. This point is actually also used in \cite{KQ}, but the reference given (Berkson, Paluszynski and Weiss  paper) does not however contain  anything corresponding. See also Zygmund  \cite[Theorem 9.19]{Z}.
\vskip 1 pt
 

 \end{document}